\documentclass[twoside,american,DIV12]{scrartcl}
\usepackage[T1]{fontenc}
\usepackage[latin1]{inputenc}
\pagestyle{empty}
\setlength{\parskip}{\medskipamount}
\setlength{\parindent}{0pt}
\usepackage{babel}
\usepackage{prettyref}
\usepackage{mathtools}
\usepackage{amsthm}
\usepackage{amsmath}
\usepackage{amssymb}
\usepackage[unicode=true,pdfusetitle,
 bookmarks=true,bookmarksnumbered=false,bookmarksopen=false,
 breaklinks=false,pdfborder={0 0 1},backref=false,colorlinks=false]
 {hyperref}

\makeatletter
 \theoremstyle{definition}
 \newtheorem*{defn*}{\protect\definitionname}
\theoremstyle{plain}
\newtheorem{thm}{\protect\theoremname}[section]
  \theoremstyle{remark}
  \newtheorem{rem}[thm]{\protect\remarkname}
  \theoremstyle{definition}
  \newtheorem{example}[thm]{\protect\examplename}
  \theoremstyle{plain}
  \newtheorem{lem}[thm]{\protect\lemmaname}
  \theoremstyle{plain}
  \newtheorem{prop}[thm]{\protect\propositionname}


\usepackage[T1]{fontenc}    
\usepackage{enumerate}
\usepackage{a4wide}        
\addtolength{\headheight}{2pt}
\usepackage{amsmath}
\usepackage{euscript}
\usepackage{url}
\usepackage{prettyref}
\usepackage{scrpage2}
\pagestyle{useheadings}
\usepackage{tu-preprint}
\usepackage{scrpage2}
\pagestyle{useheadings}

\allowdisplaybreaks[1] 
\usepackage{amsfonts}
\usepackage{amsfonts}
\newenvironment{keywords}{ \noindent\footnotesize\textbf{Keywords and phrases:}}{}

\newenvironment{class}{\noindent\footnotesize\textbf{Mathematics subject classification 2010:}}{}

\usepackage{color}




\newcommand*{\dive}{\operatorname{div}}

\newcommand*{\curl}{\operatorname{curl}}

\newcommand*{\grad}{\operatorname{grad}}





\DeclareMathAccent{\Circ}{\mathalpha}{operators}{"17}

\renewcommand{\Re}{\operatorname{\mathfrak{Re}}}


\renewcommand*{\epsilon}{\varepsilon}
\renewcommand*{\rho}{\varrho}

\arraycolsep2pt

\newrefformat{prop}{Proposition \ref{#1}}
\newrefformat{lem}{Lemma \ref{#1}}
\newrefformat{thm}{Theorem \ref{#1}}
\newrefformat{cor}{Corollary \ref{#1}}
\newrefformat{rem}{Remark \ref{#1}}
\newrefformat{exa}{Example \ref{#1}}
\newrefformat{sub}{Subsection \ref{#1}}
\newrefformat{eq}{(\ref{#1})}

\newrefformat{item}{(\ref{#1})}

\theoremstyle{definition}
\newtheorem*{hyp}{Hypothesis}

\newrefformat{hyp}{Hypothesis \ref{#1}}

\makeatother

\institut{Institut f\"ur Analysis}

\preprintnumber{MATH-AN-01-2015}

\preprinttitle{On a class of block operator matrices in system theory.}

\author{Sascha Trostorff}



\AtBeginDocument{
  
}

\makeatother

  \providecommand{\definitionname}{Definition}
  \providecommand{\examplename}{Example}
  \providecommand{\lemmaname}{Lemma}
  \providecommand{\propositionname}{Proposition}
  \providecommand{\remarkname}{Remark}
\providecommand{\theoremname}{Theorem}

\begin{document}
\makepreprinttitlepage

\author{ Sascha Trostorff \\ Institut f\"ur Analysis, Fachrichtung Mathematik\\ Technische Universit\"at Dresden\\ Germany\\ sascha.trostorff@tu-dresden.de}

\title{On a class of block operator matrices in system theory.}

\maketitle
\begin{abstract} \textbf{Abstract. }We consider a class of block
operator matrices arising in the study of scattering passive systems,
especially in the context of boundary control problems. We prove that
these block operator matrices are indeed a subclass of block operator
matrices considered in {[}Trostorff: A characterization of boundary
conditions yielding maximal monotone operators. J. Funct. Anal., 267(8):
2787--2822, 2014{]}, which can be characterized in terms of an associated
boundary relation.\end{abstract}

\begin{keywords} Maximal monotone operators, boundary data spaces,
selfadjoint relations\end{keywords}

\begin{class} 47N20, 47B44, 93A30 \end{class}

\newpage

\tableofcontents{} 

\newpage

\section{Introduction}

Following \cite{Staffans2012_phys}, a natural class of $C_{0}$-semigroup
generators $-A$ arising in the context of scattering passive systems
in system theory, can be described as a block operator matrix of the
following form: Let $E_{0},E,H,U$ be Hilbert spaces with $E_{0}\subseteq E$
dense and continuous and let $L\in L(E_{0},H),\, K\in L(E_{0},U).$
Moreover, denote by $L^{\diamond}\in L(H,E_{0}')$ and $K^{\diamond}\in L(U,E_{0}')$
the dual operators of $L$ and $K,$ respectively, where we identify
$H$ and $U$ with their dual spaces. Then $A$ is a restriction of
$\left(\begin{array}{cc}
K^{\diamond}K & -L^{\diamond}\\
L & 0
\end{array}\right)$ with domain 
\begin{equation}
\mathcal{D}(A)\coloneqq\left\{ (u,w)\in E_{0}\times H\,|\, K^{\diamond}Ku-L^{\diamond}w\in E\right\} ,\label{eq:domain_syst}
\end{equation}
where we consider $E\cong E'$ as a subspace of $E_{0}'$. It is proved
in \cite[Theorem 1.4]{Staffans2012_phys} that for such operator matrices
$A$, the operator $-A$ generates a contractive $C_{0}$-semigroup
on $E\oplus H$ and a so-called scattering passive system, containing
$-A$ as the generator of the corresponding system node, is considered
(see \cite{Staffans2005_book} for the notion of system nodes and
scattering passive systems). This class of semigroup generators were
particularly used to study boundary control systems, see e.g. \cite{Staffans2013_Maxwell,Tucsnak_Weiss2003,Tucsnak2003_thinair2}.
In these cases, $L$ is a suitable realization of a differential operator
and $K$ is a trace operator associated with $L$. More precisely,
$G_{0}\subseteq L\subseteq G,$ where $G_{0}$ and $G$ are both densely
defined closed linear operators, such that $K|_{\mathcal{D}(G_{0})}=0$
(as a typical example take $G_{0}$ and $G$ as the realizations of
the gradient on $L_{2}(\Omega)$ for some open set $\Omega\subseteq\mathbb{R}^{n}$
with $\mathcal{D}(G_{0})=H_{0}^{1}(\Omega)$ and $\mathcal{D}(G)=H^{1}(\Omega)$).
It turns out that in this situation, the operator $A$ is a restriction
of the operator matrix $\left(\begin{array}{cc}
0 & D\\
G & 0
\end{array}\right),$ where $D\coloneqq-(G_{0})^{\ast}$ (see \prettyref{lem:D_extends}
below). Such restrictions were considered by the author in \cite{Trostorff2013_bd_maxmon},
where it was shown that such (also nonlinear) restrictions are maximal
monotone (a hence, $-A$ generates a possibly nonlinear contraction
semigroup), if and only if an associated boundary relation on the
so-called boundary data space of $G$ is maximal monotone.\\
In this note, we characterize the class of boundary relations, such
that the corresponding operator $A$ satisfies \prettyref{eq:domain_syst}
for some Hilbert spaces $E_{0},U$ and operators $L\in L(E_{0},H),\, K\in L(E_{0},U)$.
We hope that this result yields a better understanding of the semigroup
generators used in boundary control systems and provides a possible
way to generalize known system-theoretical results to a class of nonlinear
problems.\\
The article is structured as follows. In Section 2 we recall the basic
notion of maximal monotone relations, we state the characterization
result of \cite{Trostorff2013_bd_maxmon} and introduce the class
of block operator matrices considered in \cite{Staffans2012_phys}.
Section 3 is devoted to the main result (\prettyref{thm:main}) and
its proof.\\
Throughout, every Hilbert space is assumed to be complex, its inner
product $\langle\cdot|\cdot\rangle$ is linear in the second and conjugate
linear in the first argument and the induced norm is denoted by $|\cdot|$.

\section{Preliminaries}

\subsection{Maximal monotone relations}

In this section we introduce the basic notions for maximal monotone
relations. Throughout let $H$ be a Hilbert space. 
\begin{defn*}
Let $C\subseteq H\oplus H.$ We call $C$ \emph{linear, }if $C$ is
a linear subspace of $H\oplus H$. Moreover, we define for $M,N\subseteq H$
the \emph{pre-set of $M$ under $C$ }by\emph{ 
\[
[M]C\coloneqq\{x\in H\,|\,\exists y\in M:(x,y)\in C\}
\]
}and the \emph{post-set of $N$ under $C$ }by 
\[
C[N]\coloneqq\{y\in H\,|\,\exists x\in N:(x,y)\in C\}.
\]
The \emph{inverse relation $C^{-1}$ }of $C$ is defined by 
\[
C^{-1}\coloneqq\{(v,u)\in H\oplus H\,|\,(u,v)\in C\}.
\]
A relation $C$ is called \emph{monotone, }if for each $(x,y),(u,v)\in C$:
\[
\Re\langle x-u|y-v\rangle\geq0.
\]
A monotone relation $C$ is called \emph{maximal monotone, }if for
each monotone relation $B\subseteq H\oplus H$ with $C\subseteq B$
we have $C=B.$ Moreover, we define the \emph{adjoint relation $C^{\ast}\subseteq H\oplus H$
of $C$ }by 
\[
C^{\ast}\coloneqq\left\{ (v,-u)\in H\oplus H\,|\,(u,v)\in C\right\} ^{\bot},
\]
where the orthogonal complement is taken in $H\oplus H$. A relation
$C$ is called \emph{selfadjoint, }if $C=C^{\ast}$.\end{defn*}
\begin{rem}
$\,$

\begin{enumerate}[(a)]

\item A pair $(x,y)\in H\oplus H$ belongs to $C^{\ast}$ if and
only if for each $(u,v)\in C$ we have 
\[
\langle v|x\rangle_{H}=\langle u|y\rangle_{H}.
\]
Thus, the definition of $C^{\ast}$ coincides with the usual definition
of the adjoint operator for a densely defined linear operator $C:\mathcal{D}(C)\subseteq H\to H.$ 

\item Note that a selfadjoint relation is linear and closed, since
it is an orthogonal complement.

\end{enumerate}
\end{rem}
We recall the famous characterization result for maximal monotone
relations due to G. Minty.
\begin{thm}[\cite{Minty}]
 \label{thm:minty} Let $C\subseteq H\oplus H$ be monotone. Then
the following statements are equivalent:

\begin{enumerate}[(i)]

\item $C$ is maximal monotone,

\item $\exists\lambda>0:\,(1+\lambda C)[H]=H,$ where $(1+\lambda C)\coloneqq\{(u,u+\lambda v)\,|\,(u,v)\in C\}$,

\item $\forall\lambda>0:\,(1+\lambda C)[H]=H.$

\end{enumerate}\end{thm}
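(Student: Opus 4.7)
The plan is to cycle through (iii)$\Rightarrow$(ii)$\Rightarrow$(i)$\Rightarrow$(iii). The first implication is trivial. For (ii)$\Rightarrow$(i), pick a witness $\lambda>0$ and any pair $(x,y)\in H\oplus H$ for which $C\cup\{(x,y)\}$ is still monotone. Surjectivity of $1+\lambda C$ provides $(u,v)\in C$ with $u+\lambda v=x+\lambda y$, so $x-u=-\lambda(y-v)$. Applying monotonicity of the extended relation to the pairs $(u,v)$ and $(x,y)$ yields $0\leq\Re\langle x-u|y-v\rangle=-\lambda|y-v|^2$, forcing $(x,y)=(u,v)\in C$. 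Hence $C$ admits no proper monotone extension and is maximal monotone.

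The main work lies in (i)$\Rightarrow$(iii). Fix $\lambda>0$ and set $R_\lambda:=(1+\lambda C)[H]$. From monotonicity alone I would first extract the identity
\[
|y_1-y_2|^2=|x_1-x_2|^2+2\lambda\Re\langle x_1-x_2|v_1-v_2\rangle+\lambda^2|v_1-v_2|^2\geq|x_1-x_2|^2
\]
valid whenever $(x_i,v_i)\in C$ and $y_i=x_i+\lambda v_i$, which shows simultaneously that $(1+\lambda C)^{-1}$ is single-valued and $1$-Lipschitz on $R_\lambda$. Next, maximality implies that $C$ is closed, since a sequential limit of pairs in $C$ preserves the monotonicity inequality and must therefore already lie in $C$. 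Combining both facts gives closedness of $R_\lambda$: if $y_n=x_n+\lambda v_n\to y$ in $R_\lambda$, the Lipschitz bound forces $x_n$ to be Cauchy, hence $x_n\to x$ and $v_n\to(y-x)/\lambda=:v$, and closedness of $C$ gives $(x,v)\in C$ and $y\in R_\lambda$.

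It remains to show $R_\lambda=H$ for every $\lambda>0$. Let $\Lambda:=\{\mu>0\,:\,R_\mu=H\}$. A Banach fixed-point argument shows $\Lambda$ is open: if $\mu_0\in\Lambda$ and $\lambda$ is sufficiently close to $\mu_0$, the equation $x+\lambda v=y$ may be recast as a contractive fixed-point equation in $v$ involving the globally defined $1$-Lipschitz resolvent $(1+\mu_0 C)^{-1}$, which gives $(\mu_0/2,3\mu_0/2)\subseteq\Lambda$; iterating this interval expansion recovers all of $(0,\infty)$ once $\Lambda$ is known to be nonempty. The main obstacle, and the genuine heart of Minty's theorem, is establishing $\Lambda\neq\emptyset$. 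The standard route combines the closedness of $R_\mu$ already proven with an extension argument: one views the resolvent via the Cayley-type map $V=(I-\mu C)(I+\mu C)^{-1}$, which is non-expansive on the closed set $R_\mu$, extends it to a non-expansive map on the whole of $H$, and then invokes maximality to identify the extended values as arising from genuine pairs in $C$. This last identification is the step I expect to require the most care, as it is where the interplay between maximality, completeness of $H$, and the monotonicity inequality must be fully exploited.
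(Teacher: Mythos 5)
The paper does not prove this statement at all---it is quoted directly from Minty's article as a known result---so there is no in-text argument to compare against, and your attempt must be judged on its own terms. The overall organization is correct: (iii)$\Rightarrow$(ii) is trivial; your (ii)$\Rightarrow$(i) argument is complete and correct; and in (i)$\Rightarrow$(iii) the resolvent identity, the single-valuedness and $1$-Lipschitz continuity of $(1+\lambda C)^{-1}$, the closedness of $C$ and of $R_\lambda$, and the fixed-point argument showing that $\Lambda=\{\mu>0:R_\mu=H\}$ is open (in fact $(\mu_0/2,\infty)\subseteq\Lambda$ whenever $\mu_0\in\Lambda$) are all sound.

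The genuine gap is exactly where you flag it yourself: you never establish $\Lambda\neq\emptyset$, and without that the whole of (i)$\Rightarrow$(iii) collapses, because surjectivity of $1+\mu C$ for a single $\mu$ is the entire content of Minty's theorem. Two things are missing from your closing sketch. First, the extension of the Cayley map $V=(1-\mu C)(1+\mu C)^{-1}$ from $R_\mu$ to a non-expansive map $\tilde{V}$ on all of $H$ is the Kirszbraun--Valentine theorem, a substantial result that you invoke without naming or justifying; it cannot be waved through as routine. Second, the ``identification'' step you defer as the delicate one is in fact the mechanical part and should simply be written out: set $\tilde{C}\coloneqq\bigl\{\bigl(\tfrac{1}{2}(w+\tilde{V}w),\tfrac{1}{2\mu}(w-\tilde{V}w)\bigr)\,:\,w\in H\bigr\}$; the identity $\Re\langle a+b|a-b\rangle=|a|^{2}-|b|^{2}$ shows $\tilde{C}$ is monotone precisely because $\tilde{V}$ is non-expansive, one checks $C\subseteq\tilde{C}$ since $\tilde{V}$ extends $V$, and $(1+\mu\tilde{C})[H]=H$ by construction, so maximality forces $C=\tilde{C}$ and hence $R_\mu=H$. (Once this is done for arbitrary $\mu$, the closedness of $R_\mu$ and the openness of $\Lambda$ become superfluous, since Kirszbraun extends from arbitrary subsets.) As written, your proof reduces the theorem to the one implication that carries all the weight and leaves that implication unproved.
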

\begin{rem}
$\,$\label{rem:max_mon}

\begin{enumerate}[(a)]

\item We note that for a monotone relation $C\subseteq H\oplus H$
the relations $(1+\lambda C)^{-1}$ for $\lambda>0$ are Lipschitz-continuous
mappings with best Lipschitz-constant less than or equal to $1$.
By the latter Theorem, maximal monotone relations are precisely those
monotone relations, where $(1+\lambda C)^{-1}$ for $\lambda>0$ is
defined on the whole Hilbert space $H$. 

\item If $C\subseteq H\oplus H$ is closed and linear, then $C$
is maximal monotone if and only if $C$ and $C^{\ast}$ are monotone.
Indeed, if $C$ is maximal monotone, then $(1+\lambda C)^{-1}\in L(H)$
for each $\lambda>0$ with $\sup_{\lambda>0}\|(1+\lambda C)^{-1}\|\leq1.$
Hence, $(1+\lambda C^{\ast})^{-1}=\left(\left(1+\lambda C\right)^{-1}\right)^{\ast}\in L(H)$
for each $\lambda>0$ with $\sup_{\lambda>0}\|(1+\lambda C^{\ast})^{-1}\|\leq1$.
The latter gives for each $(x,y)\in C^{\ast}$ and $\lambda>0$
\begin{align*}
|x+\lambda y|_{H}^{2} & =|x|_{H}^{2}+2\Re\lambda\langle x|y\rangle_{H}+\lambda^{2}|y|_{H}^{2}\\
 & =|(1+\lambda C^{\ast})^{-1}(x+\lambda y)|_{H}^{2}+2\Re\lambda\langle x|y\rangle_{H}+\lambda^{2}|y|_{H}^{2}\\
 & \leq|x+\lambda y|_{H}^{2}+2\Re\lambda\langle x|y\rangle_{H}+\lambda^{2}|y|_{H}^{2}
\end{align*}
and hence, 
\[
-\frac{\lambda}{2}|y|^{2}\leq\Re\langle x|y\rangle_{H}.
\]
Letting $\lambda$ tend to $0$, we obtain the monotonicity of $C^{\ast}$.
If on the other hand $C$ and $C^{\ast}$ are monotone, we have that
$[\{0\}](1+\lambda C^{\ast})=\{0\}$ for each $\lambda>0$ and thus,
$\overline{[H](1+\lambda C)^{-1}}=\overline{(1+\lambda C)[H]}=\left([\{0\}](1+\lambda C^{\ast})\right)^{\bot}=H$.
Since, moreover $(1+\lambda C)^{-1}$ is closed and Lipschitz-continuous
due to the monotonicity of $C$, we obtain that $[H](1+\lambda C)^{-1}$
is closed, from which we derive the maximal monotonicity by \prettyref{thm:minty}.

\end{enumerate}
\end{rem}

\subsection{Boundary data spaces and a class of maximal monotone block operator
matrices}

In this section we will recall the main result of \cite{Trostorff2013_bd_maxmon}.
For doing so, we need the following definitions. Throughout, let $E,H$
be Hilbert spaces and $G_{0}:\mathcal{D}(G_{0})\subseteq E\to H$
and $D_{0}:\mathcal{D}(D_{0})\subseteq H\to E$ be two densely defined
closed linear operators satisfying 
\[
G_{0}\subseteq-\left(D_{0}\right)^{\ast}.
\]
We set $G\coloneqq\left(-D_{0}\right)^{\ast}\supseteq G_{0}$ and
$D\coloneqq-\left(G_{0}\right)^{\ast}\supseteq D_{0}$, which are
both densely defined closed linear operators.
\begin{example}
\label{exa:G_D}As a guiding example we consider the following operators.
Let $\Omega\subseteq\mathbb{R}^{n}$ open and define $G_{0}$ as the
closure of the operator 
\begin{align*}
C_{c}^{\infty}(\Omega)\subseteq L_{2}(\Omega) & \to L_{2}(\Omega)^{n}\\
\phi & \mapsto\left(\partial_{i}\phi\right)_{i\in\{1,\ldots,n\},}
\end{align*}
where $C_{c}^{\infty}(\Omega)$ denotes the set of infinitely differentiable
functions compactly supported in $\Omega$. Moreover, let $D_{0}$
be the closure of 
\begin{align*}
C_{c}^{\infty}(\Omega)\subseteq L_{2}(\Omega)^{n} & \to L_{2}(\Omega)\\
(\phi_{i})_{i\in\{1,\ldots,n\}} & \mapsto\sum_{i=1}^{n}\partial_{i}\phi_{i}.
\end{align*}
Then, by integration by parts, we obtain $G_{0}\subseteq-\left(D_{0}\right)^{\ast}$.
Moreover, we have that $G:\mathcal{D}(G)\subseteq L_{2}(\Omega)\to L_{2}(\Omega)^{n},\, u\mapsto\grad u$
with $\mathcal{D}(G)=H^{1}(\Omega)$ as well as $D:\mathcal{D}(D)\subseteq L_{2}(\Omega)^{n}\to L_{2}(\Omega),\, v\mapsto\dive v$
with $\mathcal{D}(D)=\{v\in L_{2}(\Omega)^{n}\,|\,\dive v\in L_{2}(\Omega)\},$
where $\grad u$ and $\dive v$ are meant in the sense of distributions.
We remark that in case of a smooth boundary $\partial\Omega$ of $\Omega,$
elements $u\in\mathcal{D}(G_{0})=H_{0}^{1}(\Omega)$ are satisfying
$u=0$ on $\partial\Omega$ and elements $v\in\mathcal{D}(D_{0})$
are satisfying $v\cdot n=0$ on $\partial\Omega$, where $n$ denotes
the unit outward normal vector field. Thus, $G_{0}$ and $D_{0}$
are the gradient and divergence with vanishing boundary conditions,
while $G$ and $D$ are the gradient and divergence without any boundary
condition. In the same way one might treat the case of $G_{0}=\curl_{0}$,
the rotation of vectorfields with vanishing tangential component and
$G=\curl.$ Note that then $D_{0}=-\curl_{0}$ and $D=-\curl.$
\end{example}
As the previous example illustrates, we want to interpret $G_{0}$
and $D_{0}$ as abstract differential operators with vanishing boundary
conditions, while $G$ and $D$ are the respective differential operators
without any boundary condition. This motivates the following definition.
\begin{defn*}
We define the spaces%
\footnote{For a closed linear operator $C$ we denote by $\mathcal{D}_{C}$
its domain, equipped with the graph-norm of $C$.%
} 
\[
\mathcal{BD}(G)\coloneqq\left(\mathcal{D}(G_{0})\right)^{\bot_{\mathcal{D}_{G}}},\quad\mathcal{BD}(D)\coloneqq\left(\mathcal{D}(D_{0})\right)^{\bot_{\mathcal{D}_{D}}},
\]
where the orthogonal complements are taken in $\mathcal{D}_{G}$ and
$\mathcal{D}_{D}$, respectively. We call $\mathcal{BD}(G)$ and $\mathcal{BD}(D)$
\emph{abstract boundary data spaces associated with $G$ }and\emph{
$D$, }respectively. Consequently, we can decompose $\mathcal{D}_{G}=\mathcal{D}_{G_{0}}\oplus\mathcal{BD}(G)$
and $\mathcal{D}_{D}=\mathcal{D}_{D_{0}}\oplus\mathcal{BD}(D).$ We
denote by $\pi_{\mathcal{BD}(G)}:\mathcal{D}_{G}\to\mathcal{BD}(G)$
and by $\pi_{\mathcal{BD}(D)}:\mathcal{D}_{D}\to\mathcal{BD}(D)$
the corresponding orthogonal projections. In consequence, $\pi_{\mathcal{BD}(G)}^{\ast}:\mathcal{BD}(G)\to\mathcal{D}_{G}$
and $\pi_{\mathcal{BD}(D)}^{\ast}:\mathcal{BD}(D)\to\mathcal{D}_{D}$
are the canonical embeddings and we set $P_{\mathcal{BD}(G)}\coloneqq\pi_{\mathcal{BD}(G)}^{\ast}\pi_{\mathcal{BD}(G)}:\mathcal{D}_{G}\to\mathcal{D}_{G}$
as well as $P_{\mathcal{BD}(D)}\coloneqq\pi_{\mathcal{BD}(D)}^{\ast}\pi_{\mathcal{BD}(D)}:\mathcal{D}_{D}\to\mathcal{D}_{D}$.
An easy computation gives 
\[
\mathcal{BD}(G)=[\{0\}](1-DG),\quad\mathcal{BD}(D)=[\{0\}](1-GD)
\]
 and thus, $G[\mathcal{BD}(G)]\subseteq\mathcal{BD}(D)$ as well as
$D[\mathcal{BD}(D)]\subseteq\mathcal{BD}(G).$ We set 
\begin{align*}
\stackrel{\bullet}{G}:\mathcal{BD}(G) & \to\mathcal{BD}(D),x\mapsto Gx\\
\stackrel{\bullet}{D}:\mathcal{BD}(D) & \to\mathcal{BD}(G),x\mapsto Dx
\end{align*}
and observe that both are unitary operators satisfying $\left(\stackrel{\bullet}{G}\right)^{\ast}=\stackrel{\bullet}{D}$
(see \cite[Section 5.2]{Picard2012_comprehensive_control} for details).
\end{defn*}
Having these notions at hand, we are ready to state the main result
of \cite{Trostorff2013_bd_maxmon}.
\begin{thm}[{\cite[Theorem 3.1]{Trostorff2013_bd_maxmon}}]
\label{thm:char_max_mon} Let $G_{0},D_{0},G$ and $D$ be as above
and let 
\[
A\subseteq\left(\begin{array}{cc}
0 & D\\
G & 0
\end{array}\right):\mathcal{D}(A)\subseteq H_{0}\oplus H_{1}\to H_{0}\oplus H_{1}
\]
be a (possibly nonlinear) restriction of $\left(\begin{array}{cc}
0 & D\\
G & 0
\end{array}\right):\mathcal{D}(G)\times\mathcal{D}(D)\subseteq H_{0}\oplus H_{1}\to H_{0}\oplus H_{1},(u,w)\mapsto(Dw,Gu).$ Then $A$ is maximal monotone, if and only if there exists a maximal
monotone relation $h\subseteq\mathcal{BD}(G)\oplus\mathcal{BD}(G)$
such that 
\[
\mathcal{D}(A)=\left\{ (u,w)\in\mathcal{D}(G)\times\mathcal{D}(D)\,\left|\,\left(\pi_{\mathcal{BD}(G)}u,\stackrel{\bullet}{D}\pi_{\mathcal{BD}(D)}w\right)\in h\right.\right\} .
\]
We call $h$ the \emph{boundary relation associated with $A$.}
\end{thm}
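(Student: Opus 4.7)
The plan is to prove the theorem by combining Minty's theorem (\prettyref{thm:minty}) with an abstract integration-by-parts identity that translates monotonicity of $A$ into monotonicity of a candidate boundary relation $h\subseteq\mathcal{BD}(G)\oplus\mathcal{BD}(G)$. For the ``only if'' direction the natural candidate is
\[h:=\{(\pi_{\mathcal{BD}(G)}u,\stackrel{\bullet}{D}\pi_{\mathcal{BD}(D)}w)\mid (u,w)\in\mathcal{D}(A)\},\]
whereas for the ``if'' direction $A$ is defined through $h$ via the stated domain condition and the action $(u,w)\mapsto(Dw,Gu)$.

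The central computation I would carry out first is the identity
\[\Re\bigl(\langle u\mid Dw\rangle_{E}+\langle w\mid Gu\rangle_{H}\bigr)=\Re\langle\pi_{\mathcal{BD}(G)}u\mid\stackrel{\bullet}{D}\pi_{\mathcal{BD}(D)}w\rangle_{\mathcal{D}_{G}}\qquad(u\in\mathcal{D}(G),w\in\mathcal{D}(D)).\]
Decomposing $u=u_{0}+u_{1}$ and $w=w_{0}+w_{1}$ along $\mathcal{D}_{G}=\mathcal{D}_{G_{0}}\oplus\mathcal{BD}(G)$ and $\mathcal{D}_{D}=\mathcal{D}_{D_{0}}\oplus\mathcal{BD}(D)$, and using $D=-G_{0}^{\ast},G=-D_{0}^{\ast}$, the cross-terms containing $u_{0}$ or $w_{0}$ have vanishing real part, so that only $\Re\bigl(\langle u_{1}\mid Dw_{1}\rangle+\langle w_{1}\mid Gu_{1}\rangle\bigr)$ survives. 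Exploiting $DGu_{1}=u_{1}$ and $GDw_{1}=w_{1}$ (because $\mathcal{BD}(G)=[\{0\}](1-DG)$ and $\mathcal{BD}(D)=[\{0\}](1-GD)$) together with $\left(\stackrel{\bullet}{G}\right)^{\ast}=\stackrel{\bullet}{D}$ converts the surviving term into $\Re\langle u_{1}\mid\stackrel{\bullet}{D}w_{1}\rangle_{\mathcal{D}_{G}}$. Applied to differences, the identity immediately yields the equivalence of monotonicity of $A$ with monotonicity of $h$.

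For the ``if'' direction monotonicity of $A$ is then clear, and by \prettyref{thm:minty} it suffices to solve $(1+A)(u,w)=(f,g)$ for every $(f,g)\in E\oplus H$. I would parametrise candidate solutions by their boundary parts $(u_{1},w_{1})\in\mathcal{BD}(G)\times\mathcal{BD}(D)$: for each such choice, the remaining equation in $\mathcal{D}(G_{0})\times\mathcal{D}(D_{0})$ is solvable precisely when the shifted right-hand side lies in the range of $1+\bigl(\begin{smallmatrix}0 & D_{0}\\ G_{0} & 0\end{smallmatrix}\bigr)$, an operator which the cancellation $\langle u_{0}\mid D_{0}w_{0}\rangle+\langle w_{0}\mid G_{0}u_{0}\rangle\in i\mathbb{R}$ shows to be bounded below, with co-kernel $\{(v,\stackrel{\bullet}{G}v):v\in\mathcal{BD}(G)\}$. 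Combined with the boundary constraint $(u_{1},\stackrel{\bullet}{D}w_{1})\in h$, this compatibility condition transports via $\stackrel{\bullet}{G},\stackrel{\bullet}{D}$ to exactly the resolvent equation $(1+h)[\mathcal{BD}(G)]=\mathcal{BD}(G)$, which holds by \prettyref{thm:minty} applied to $h$.

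For the ``only if'' direction, with $h$ defined as above, the inclusion $\mathcal{D}(A)\subseteq\{(u,w):(\pi_{\mathcal{BD}(G)}u,\stackrel{\bullet}{D}\pi_{\mathcal{BD}(D)}w)\in h\}$ is tautological; the reverse inclusion reduces to the invariance of $\mathcal{D}(A)$ under additions from $\mathcal{D}(G_{0})\times\mathcal{D}(D_{0})$. Given $(u,w)\in\mathcal{D}(A)$ and $(u_{0},w_{0})\in\mathcal{D}(G_{0})\times\mathcal{D}(D_{0})$, the cross-term computation of paragraph two shows that enlarging $A$ by $((u+u_{0},w+w_{0}),(Dw+Dw_{0},Gu+Gu_{0}))$ preserves monotonicity, so maximality forces this pair into $A$. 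Monotonicity of $h$ is the identity above, while maximal monotonicity of $h$ follows by running the reduction of the ``if'' direction in reverse: surjectivity of $(1+A)$ on $E\oplus H$ translates into surjectivity of $(1+h)$ on $\mathcal{BD}(G)$, and \prettyref{thm:minty} applies once more. The main technical obstacle throughout is the clean bookkeeping between the $E$-, $H$- and graph-inner products: verifying that the unitaries $\stackrel{\bullet}{G},\stackrel{\bullet}{D}$ transport both the compatibility condition for $(1+A)|_{\mathcal{D}(G_{0})\times\mathcal{D}(D_{0})}$ and the boundary constraint into one and the same resolvent equation for $h$ is where the calculation is most delicate.
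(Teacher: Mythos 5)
The paper does not actually prove this theorem: it is quoted verbatim from \cite{Trostorff2013_bd_maxmon} and used as a black box, so there is no in-document proof to compare your argument against. Judged on its own, your outline is correct and is essentially the natural (and, as far as I can tell, the original) route. The key identity $\Re\bigl(\langle u|Dw\rangle_{E}+\langle w|Gu\rangle_{H}\bigr)=\Re\langle\pi_{\mathcal{BD}(G)}u|\stackrel{\bullet}{D}\pi_{\mathcal{BD}(D)}w\rangle_{\mathcal{BD}(G)}$ checks out: the two cross-brackets $\langle u_{0}|Dw\rangle+\langle w|G_{0}u_{0}\rangle$ and $\langle u_{1}|D_{0}w_{0}\rangle+\langle w_{0}|Gu_{1}\rangle$ are purely imaginary because $D=-G_{0}^{\ast}$ and $G=-D_{0}^{\ast}$, and the surviving term becomes the $\mathcal{D}_{G}$-inner product via $GDw_{1}=w_{1}$. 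Your Fredholm-type reduction is also sound: $1+\bigl(\begin{smallmatrix}0&D_{0}\\G_{0}&0\end{smallmatrix}\bigr)$ is closed and bounded below with closed range whose orthogonal complement is indeed $\{(v,\stackrel{\bullet}{G}v):v\in\mathcal{BD}(G)\}=\ker\bigl(1+\bigl(\begin{smallmatrix}0&-D\\-G&0\end{smallmatrix}\bigr)\bigr)$, and the resulting compatibility condition reads $u_{1}+\stackrel{\bullet}{D}w_{1}=\phi(f,g)$, where $\phi(f,g)$ is the Riesz representative in $\mathcal{BD}(G)$ of $v\mapsto\langle v|f\rangle_{E}+\langle Gv|g\rangle_{H}$; since $\phi(v_{0},Gv_{0})=v_{0}$, this map is onto $\mathcal{BD}(G)$, which is the one small point you leave implicit but need for the ``only if'' direction (surjectivity of $1+A$ must reach every right-hand side of the resolvent equation for $h$). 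The translation-invariance argument for the reverse inclusion $\{(u,w):(\pi_{\mathcal{BD}(G)}u,\stackrel{\bullet}{D}\pi_{\mathcal{BD}(D)}w)\in h\}\subseteq\mathcal{D}(A)$ via maximality is exactly right. I see no gap beyond these expository elisions.
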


\subsection{A class of block operator matrices in system theory}

In \cite{Staffans2012_phys} the following class of block operator
matrices is considered: Let $E,E_{0},H,U$ be Hilbert spaces such
that $E_{0}\subseteq E$ with dense and continuous embedding. Moreover,
let $L\in L(E_{0},H)$ and $K\in L(E_{0},U)$ such that 
\[
\left(\begin{array}{c}
L\\
K
\end{array}\right):E_{0}\subseteq E\to H\oplus U
\]
is closed. This assumption particularly yields that the norm on $E_{0}$
is equivalent to the graph norm of $\left(\begin{array}{c}
L\\
K
\end{array}\right).$ We define $L^{\diamond}\in L(H,E_{0}')$ and $K^{\diamond}\in L(U,E_{0}')$
by $\left(L^{\diamond}x\right)(w)\coloneqq\langle x|Lw\rangle_{H}$
and $\left(K^{\diamond}u\right)(w)\coloneqq\langle u|Kw\rangle_{U}$
for $x\in H,w\in E_{0},u\in U$ and consider the following operator
\begin{equation}
A\subseteq\left(\begin{array}{cc}
K^{\diamond}K & -L^{\diamond}\\
L & 0
\end{array}\right):\mathcal{D}(A)\subseteq E\oplus H\to E\oplus H\label{eq:A_Staffans}
\end{equation}
with $\mathcal{D}(A)\coloneqq\left\{ (u,w)\in E_{0}\times H\,|\, K^{\diamond}Ku-L^{\diamond}w\in E\right\} ,$
where we consider $E\cong E'\subseteq E_{0}'$ as a subspace of $E_{0}'.$
We recall the following result from \cite{Staffans2012_phys}, which
we present in a slight different formulation%
\footnote{We note that in \cite{Staffans2012_phys} an additional operator $G\in L(E_{0},E_{0}')$
is incorporated in $A$, which we will omit for simplicity.%
}.
\begin{thm}[{\cite[Theorem 1.4]{Staffans2012_phys}}]
\label{thm:Staffans} The operator $A$ defined above is maximal
monotone. \end{thm}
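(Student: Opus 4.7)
The plan is to apply Minty's theorem (\prettyref{thm:minty}): verify monotonicity directly from the duality pairing in $E_0'$, and then solve the range equation $(1+A)(u,w)=(f,g)$ by a single Lax--Milgram step on $E_{0}$.

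Since $A$ is linear, monotonicity reduces to showing $\Re\langle (u,w)\mid A(u,w)\rangle_{E\oplus H}\geq 0$ for every $(u,w)\in\mathcal{D}(A)$. The domain condition $\phi:=K^{\diamond}Ku-L^{\diamond}w\in E$ means, under the identification $E\hookrightarrow E_{0}'$, that
\[
\langle \phi\mid v\rangle_{E}=\langle Ku\mid Kv\rangle_{U}-\langle w\mid Lv\rangle_{H}\qquad(v\in E_{0}).
\]
Setting $v=u\in E_{0}$ and taking real parts, the term $-\Re\langle w\mid Lu\rangle_{H}$ coming from the first component cancels exactly the contribution $\Re\langle w\mid Lu\rangle_{H}$ of the second component, leaving $|Ku|_{U}^{2}\geq 0$.

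For the range condition, fix $(f,g)\in E\oplus H$. The second row of $(1+A)(u,w)=(f,g)$ forces $w=g-Lu$, and substituting into the first row turns the problem into finding $u\in E_{0}$ with
\[
u+(K^{\diamond}K+L^{\diamond}L)u=f+L^{\diamond}g\qquad\text{in }E_{0}',
\]
which, tested against $v\in E_{0}$, is the variational problem
\[
b(u,v):=\langle u\mid v\rangle_{E}+\langle Ku\mid Kv\rangle_{U}+\langle Lu\mid Lv\rangle_{H}=\langle f\mid v\rangle_{E}+\langle g\mid Lv\rangle_{H}.
\]
Because $\left(\begin{array}{c}L\\K\end{array}\right)$ is closed on $E_{0}\subseteq E$, the $E_{0}$-norm is equivalent to $(|\cdot|_{E}^{2}+|L\cdot|_{H}^{2}+|K\cdot|_{U}^{2})^{1/2}$, so $b$ is a bounded, coercive sesquilinear form on $E_{0}$ while the right-hand side is a bounded functional. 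Lax--Milgram yields a unique $u\in E_{0}$. Setting $w:=g-Lu$, the equation rearranges to $K^{\diamond}Ku-L^{\diamond}w=f-u$, which lies in $E$ since $u\in E_{0}\hookrightarrow E$; hence $(u,w)\in\mathcal{D}(A)$ and $(1+A)(u,w)=(f,g)$.

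The only real subtlety I anticipate is book-keeping the identifications $H\cong H'$, $U\cong U'$ and $E\hookrightarrow E_{0}'$ consistently with the conjugate-linear-in-the-first-argument convention of the paper, so that the dualities $L^{\diamond}, K^{\diamond}$ interact correctly with the Hilbert-space inner products both in the cancellation that drives monotonicity and in the variational reformulation. Once these identifications are pinned down, the argument collapses to Minty combined with Lax--Milgram for the coercive quadratic form $b$.
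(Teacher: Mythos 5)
Your argument is correct: the cancellation $\Re\langle u\mid K^{\diamond}Ku-L^{\diamond}w\rangle_{E}+\Re\langle w\mid Lu\rangle_{H}=|Ku|_{U}^{2}$ gives monotonicity, and since $b$ is precisely the graph inner product of $\left(\begin{array}{c}L\\K\end{array}\right)$ on $E_{0}$, the Riesz/Lax--Milgram step together with $K^{\diamond}Ku-L^{\diamond}w=f-u\in E$ settles the range condition for Minty. Note, however, that the paper does not prove this theorem at all --- it imports it from \cite[Theorem 1.4]{Staffans2012_phys} --- so there is nothing internal to compare against; your proof is essentially the m-dissipativity argument of that reference, recast in the monotone-operator language, and it is a reasonable self-contained substitute for the citation.
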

\begin{rem}
We remark that in \cite[Theorem 1.4]{Staffans2012_phys} the operator
$-A$ is considered and it is proved that $-A$ is the generator of
a contraction semigroup. 
\end{rem}
We note that operators of the form (\ref{eq:A_Staffans}) were applied
to discuss boundary control problems. For instance in \cite{Tucsnak_Weiss2003,Tucsnak2003_thinair2}
the setting was used to study the wave equation with boundary control
on a smooth domain $\Omega\subseteq\mathbb{R}^{n}$. In this case
the operator $L$ was a suitable realization of the gradient on $L_{2}(\Omega)$
and $K$ was the Dirichlet trace operator. More recently, Maxwell's
equations on a smooth domain $\Omega\subseteq\mathbb{R}^{3}$ with
boundary control were studied within this setting (see \cite{Staffans2013_Maxwell}).
In this case $L$ was a suitable realization of $\curl$, while $K$
was the trace operator mapping elements in $\mathcal{D}(L)$ to their
tangential component on the boundary. \\
In both cases, there exist two closed operators $G_{0}:\mathcal{D}(G_{0})\subseteq E\to H,\: D_{0}:\mathcal{D}(D_{0})\subseteq H\to E$
with $G_{0}\subseteq-(D_{0})^{\ast}\eqqcolon G$ such that $G_{0}\subseteq L\subseteq G$
and $K|_{\mathcal{D}(G_{0})}=0$ (cp. \prettyref{exa:G_D}). It is
the purpose of this paper to show how the operators $A$ in \prettyref{eq:A_Staffans}
and in \prettyref{thm:char_max_mon} are related in this case.

\section{Main result}

Let $E,H$ be Hilbert spaces and $G_{0}:\mathcal{D}(G_{0})\subseteq E\to H$
and $D_{0}:\mathcal{D}(D_{0})\subseteq H\to E$ be densely defined
closed linear operators with $G_{0}\subseteq-(D_{0})^{\ast}\eqqcolon G$
and $D_{0}\subseteq-(G_{0})^{\ast}\eqqcolon D$. 

\begin{hyp}\label{hyp: standing} We say that two Hilbert spaces
$E_{0},U$ and two operators $L\in L(E_{0},H)$ and $K\in L(E_{0},U)$
satisfy the hypothesis, if 

\begin{enumerate}[(a)]

\item $E_{0}\subseteq E$ dense and continuous.

\item $\left(\begin{array}{c}
L\\
K
\end{array}\right):E_{0}\subseteq E\to H\oplus U$ is closed.

\item $G_{0}\subseteq L\subseteq G$ and $K|_{\mathcal{D}(G_{0})}=0$. 

\end{enumerate}

\end{hyp}
\begin{lem}
\label{lem:D_extends} Assume that $E_{0},U$ and $L,K$ satisfy the
hypothesis. Let $(u,w)\in E_{0}\times H$ such that $K^{\diamond}Ku-L^{\diamond}w\in E.$
Then $w\in\mathcal{D}(D)$ and $Dw=K^{\diamond}Ku-L^{\diamond}w.$\end{lem}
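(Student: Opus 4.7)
The plan is to unfold the definitions of $K^{\diamond}$, $L^{\diamond}$ and the embedding $E\cong E'\hookrightarrow E_{0}'$, and then to test against elements of $\mathcal{D}(G_{0})$ in order to recognise $w$ as an element of $\mathcal{D}((G_{0})^{\ast})=\mathcal{D}(D)$ via the definition of the adjoint. In other words, the statement really says nothing more than that $-(G_{0})^{\ast}$ acts on $w$ as the functional $K^{\diamond}Ku-L^{\diamond}w\in E_{0}'$, provided that this functional extends continuously to $E$.

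Concretely, I would set $f\coloneqq K^{\diamond}Ku-L^{\diamond}w\in E\subseteq E_{0}'$. Using the definitions of $K^{\diamond}$ and $L^{\diamond}$, together with the identification of $f$ as an element of $E'$ acting by the $E$-inner product, every $\phi\in E_{0}$ satisfies
\[
\langle f|\phi\rangle_{E}=\langle Ku|K\phi\rangle_{U}-\langle w|L\phi\rangle_{H}.
\]
Next I would restrict attention to test elements $\phi\in\mathcal{D}(G_{0})\subseteq E_{0}$. By \prettyref{hyp: standing}(c) we have $K\phi=0$ and $L\phi=G_{0}\phi$, so the identity collapses to
\[
\langle f|\phi\rangle_{E}=-\langle w|G_{0}\phi\rangle_{H}\qquad(\phi\in\mathcal{D}(G_{0})).
\]
Since $\mathcal{D}(G_{0})$ is dense in $E$, this exactly expresses that $w\in\mathcal{D}((G_{0})^{\ast})$ with $(G_{0})^{\ast}w=-f$. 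Recalling $D=-(G_{0})^{\ast}$, we conclude $w\in\mathcal{D}(D)$ and $Dw=f=K^{\diamond}Ku-L^{\diamond}w$.

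The only point that requires a bit of care is the bookkeeping between the various duals: one has to keep track that $E\cong E'$ is embedded into $E_{0}'$ precisely so that the action of $f\in E$ on $\phi\in E_{0}\subseteq E$ is given by $\langle f|\phi\rangle_{E}$, and that the formulas for $K^{\diamond}$ and $L^{\diamond}$ in the excerpt are consistent with this convention (in particular the sesquilinearity sides match). Once this identification is made explicit, the remainder of the proof is merely the definition of the adjoint $(G_{0})^{\ast}$, so there is no real obstacle.
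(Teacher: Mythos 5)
Your proposal is correct and follows essentially the same route as the paper: test the functional $K^{\diamond}Ku-L^{\diamond}w\in E$ against elements of $\mathcal{D}(G_{0})$, use $G_{0}\subseteq L$ and $K|_{\mathcal{D}(G_{0})}=0$ to reduce to $\langle w|G_{0}\phi\rangle_{H}=-\langle f|\phi\rangle_{E}$, and invoke the definition of the adjoint to conclude $w\in\mathcal{D}(G_{0}^{\ast})=\mathcal{D}(D)$ with $Dw=-G_{0}^{\ast}w=f$. The paper merely presents the same computation in the opposite direction, starting from $\langle w|G_{0}v\rangle_{H}$ and expanding.
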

\begin{proof}
For $v\in\mathcal{D}(G_{0})$ we compute 
\begin{align*}
\langle w|G_{c}v\rangle_{H} & =\langle w|Lv\rangle_{H}\\
 & =\left(L^{\diamond}w\right)(v)\\
 & =(-K^{\diamond}Ku+L^{\diamond}w)(v)+(K^{\diamond}Ku)(v)\\
 & =\langle-K^{\diamond}Ku+L^{\diamond}w|v\rangle_{E}+\langle Ku|Kv\rangle_{U}\\
 & =\langle-K^{\diamond}Ku+L^{\diamond}w|v\rangle_{E},
\end{align*}
where we have used $G_{0}\subseteq L$ and $Kv=0$. The latter gives
$w\in\mathcal{D}(G_{0}^{\ast})=\mathcal{D}(D)$ and $Dw=-G_{0}^{\ast}w=K^{\diamond}Ku-L^{\diamond}w.$
\end{proof}
The latter lemma shows that if the hypothesis holds and $A$ is given
as in \prettyref{eq:A_Staffans}, then $A$ is a restriction of $\left(\begin{array}{cc}
0 & D\\
G & 0
\end{array}\right)$, which, by \prettyref{thm:Staffans}, is maximal monotone. However,
such restrictions are completely characterized by their associated
boundary relation (see \prettyref{thm:char_max_mon}). The question,
which now arises is: can we characterize those boundary relations,
allowing to represent $A$ as in \prettyref{eq:A_Staffans}? The answer
gives the following theorem.
\begin{thm}
\label{thm:main} Let $A\subseteq\left(\begin{array}{cc}
0 & D\\
G & 0
\end{array}\right)$. Then the following statements are equivalent.

\begin{enumerate}[(i)]

\item  There exist Hilbert spaces $E_{0},U$ and operators $L\in L(E_{0},H),\, K\in L(E_{0},U)$
satisfying the hypothesis, such that 
\[
\mathcal{D}(A)=\{(u,w)\in E_{0}\times H\,|\, K^{\diamond}Ku-L^{\diamond}w\in E\}.
\]

\item There exists $h\subseteq\mathcal{BD}(G)\oplus\mathcal{BD}(G)$
maximal monotone and selfadjoint, such that 
\[
\mathcal{D}(A)=\{(u,w)\in\mathcal{D}(G)\times\mathcal{D}(D)\,|\,(\pi_{\mathcal{BD}(G)}u,\stackrel{\bullet}{D}\pi_{\mathcal{BD}(D)}v)\in h\}.
\]

\end{enumerate}
\end{thm}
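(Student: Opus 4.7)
My approach is to derive both implications from the abstract Green formula
\[
\langle w | Gv\rangle_H + \langle Dw | v\rangle_E \;=\; \langle \stackrel{\bullet}{D}\pi_{\mathcal{BD}(D)} w | \pi_{\mathcal{BD}(G)} v\rangle_{\mathcal{BD}(G)} \qquad (v\in\mathcal{D}(G),\,w\in\mathcal{D}(D)),
\]
which follows from the orthogonal decompositions $\mathcal{D}_G = \mathcal{D}_{G_0}\oplus \mathcal{BD}(G)$ and $\mathcal{D}_D = \mathcal{D}_{D_0}\oplus \mathcal{BD}(D)$ together with $G\stackrel{\bullet}{D}w_1 = w_1$ (a consequence of $\stackrel{\bullet}{G}\stackrel{\bullet}{D}=\mathrm{id}$ on $\mathcal{BD}(D)$): only the boundary-boundary term in the bilinear pairing survives, and the graph inner product on $\mathcal{BD}(G)$ unfolds exactly to the right-hand side.

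For $(i)\Rightarrow(ii)$, \prettyref{lem:D_extends} and \prettyref{thm:Staffans} show $A$ is maximal monotone as a restriction of $\bigl(\begin{smallmatrix}0&D\\G&0\end{smallmatrix}\bigr)$, so \prettyref{thm:char_max_mon} yields a maximal monotone $h$ with the stated domain, and $h$ is linear because $A$ is. To establish selfadjointness I would verify $h\subseteq h^{\ast}$; combined with maximality of $h$ and the monotonicity of $h^{\ast}$ given by \prettyref{rem:max_mon}(b), this forces $h=h^{\ast}$. Via the Green formula the inclusion $h\subseteq h^{\ast}$ is equivalent to
\[
\langle w | Gu'\rangle_H + \langle Dw | u'\rangle_E \;=\; \langle Gu | w'\rangle_H + \langle u | Dw'\rangle_E
\]
for $(u,w),(u',w')\in\mathcal{D}(A)$; substituting $Gu=Lu$ and $Dw = K^{\diamond}Ku - L^{\diamond}w$ (and the primed analogue) and using $(L^{\diamond}w)(u')=\langle w|Lu'\rangle_H$ together with $\langle u|K^{\diamond}Ku'\rangle_E = \langle Ku|Ku'\rangle_U$, the $L^{\diamond}$-pieces cancel in pairs and both sides collapse to $\langle Ku|Ku'\rangle_U$.

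For $(ii)\Rightarrow(i)$, the standard decomposition of a selfadjoint maximal monotone linear relation provides a closed subspace $V\subseteq\mathcal{BD}(G)$ (the closure of the domain of $h$) and a nonnegative selfadjoint operator $T$ on $V$ with
\[
h \;=\; \{(a,Ta+c)\,:\,a\in\mathcal{D}(T),\,c\in V^{\bot}\},
\]
where $V^{\bot}$ is taken in $\mathcal{BD}(G)$. I then set $U:=V$, $Lu:=Gu$, $Ku:=T^{1/2}\pi_{\mathcal{BD}(G)} u$, and
\[
E_0 \;:=\; \{u\in\mathcal{D}(G)\,:\,\pi_{\mathcal{BD}(G)} u\in\mathcal{D}(T^{1/2})\}
\]
equipped with the graph norm of $\binom{L}{K}$. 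The Hypothesis is routine: $\mathcal{D}(G_0)\subseteq E_0$ gives density in $E$, closedness of $G$ and of $T^{1/2}$ yields closedness of $\binom{L}{K}$, and $K|_{\mathcal{D}(G_0)}=0$ and $G_0\subseteq L\subseteq G$ are built in.

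The substantive step is the domain identity. For $(u,w)\in E_0\times H$, \prettyref{lem:D_extends} reduces $K^{\diamond}Ku-L^{\diamond}w\in E$ to the conditions $w\in\mathcal{D}(D)$ and $(K^{\diamond}Ku-L^{\diamond}w)(v)=\langle Dw|v\rangle_E$ for every $v\in E_0$; using $Lv=Gv$ and the Green formula, the latter becomes
\[
\langle T^{1/2}u_1|T^{1/2}v_1\rangle_V \;=\; \langle \pi_V\stackrel{\bullet}{D}w_1|v_1\rangle_V \qquad (v_1\in\mathcal{D}(T^{1/2})),
\]
with $u_1:=\pi_{\mathcal{BD}(G)}u$, $w_1:=\pi_{\mathcal{BD}(D)}w$. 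By Kato's first representation theorem applied to the closed nonnegative form $t(a,b):=\langle T^{1/2}a|T^{1/2}b\rangle_V$ on $V$, this is equivalent to $u_1\in\mathcal{D}(T)$ and $Tu_1=\pi_V\stackrel{\bullet}{D}w_1$, i.e. $(u_1,\stackrel{\bullet}{D}w_1)\in h$; the converse direction runs the same chain of equivalences backwards. The main obstacle I anticipate is the bookkeeping between the ambient $E$-pairing (in which $L^{\diamond}$ and \prettyref{lem:D_extends} are formulated) and the $\mathcal{D}_G$-graph inner product on $\mathcal{BD}(G)$ (in which the decomposition $V\oplus V^{\bot}$ and the operator $T$ live); the identity $G\stackrel{\bullet}{D}=\mathrm{id}$ on $\mathcal{BD}(D)$ is precisely what converts one pairing into the other.
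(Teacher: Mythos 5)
Your proposal is correct and follows essentially the same route as the paper: the implication (ii)$\Rightarrow$(i) uses the identical construction ($U$ the closure of $[\mathcal{BD}(G)]h$, $K=\sqrt{S}\pi_{\mathcal{BD}(G)}$, $L=G|_{E_{0}}$, obtained from the Arens decomposition of the selfadjoint relation), and your selfadjointness and domain computations for (i)$\Rightarrow$(ii) coincide with the paper's, merely packaged once and for all into the abstract Green formula that the paper re-derives inline in each proof. The only divergence is that you obtain $h$ and its maximal monotonicity by citing \prettyref{thm:Staffans} and \prettyref{thm:char_max_mon} instead of writing $h$ down explicitly and re-proving maximality via Minty's theorem --- a shortcut the paper itself points out is legitimate before giving its self-contained argument.
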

We begin to prove the implication (i)$\Rightarrow$(ii). 
\begin{lem}
\label{lem:h}Assume (i) in \prettyref{thm:main} and set 
\[
h\coloneqq\left\{ (x,y)\in\mathcal{BD}(G)\oplus\mathcal{BD}(G)\,|\,\pi_{\mathcal{BD}(G)}^{\ast}x\in E_{0},K^{\diamond}K\pi_{\mathcal{BD}(G)}^{\ast}x-L^{\diamond}\pi_{\mathcal{BD}(D)}^{\ast}\stackrel{\bullet}{G}y\in E\right\} .
\]
Then $(u,w)\in\mathcal{D}(A)$ if and only if $(u,w)\in\mathcal{D}(G)\times\mathcal{D}(D)$
with $(\pi_{\mathcal{BD}(G)}u,\stackrel{\bullet}{D}\pi_{\mathcal{BD}(D)}w)\in h$. \end{lem}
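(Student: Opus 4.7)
The goal is to show that the condition $K^{\diamond}Ku - L^{\diamond}w \in E$ on a pair $(u,w) \in E_{0} \times H$ translates precisely into the membership $(\pi_{\mathcal{BD}(G)}u, \stackrel{\bullet}{D}\pi_{\mathcal{BD}(D)}w) \in h$. The plan is to exploit the orthogonal decompositions $\mathcal{D}_{G} = \mathcal{D}_{G_{0}} \oplus \mathcal{BD}(G)$ and $\mathcal{D}_{D} = \mathcal{D}_{D_{0}} \oplus \mathcal{BD}(D)$, writing $u = u_{0} + P_{\mathcal{BD}(G)}u$ with $u_{0} \in \mathcal{D}(G_{0})$ and $w = w_{0} + P_{\mathcal{BD}(D)}w$ with $w_{0} \in \mathcal{D}(D_{0})$. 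Note that $G_{0} \subseteq L$ forces $\mathcal{D}(G_{0}) \subseteq E_{0}$, while $L \subseteq G$ yields $E_{0} \subseteq \mathcal{D}(G)$; combined with \prettyref{lem:D_extends} this already gives $\mathcal{D}(A) \subseteq \mathcal{D}(G) \times \mathcal{D}(D)$, so the above decompositions are available on both sides of the claimed equivalence.

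First I would handle the $E_{0}$-membership: since $u_{0} \in \mathcal{D}(G_{0}) \subseteq E_{0}$, the condition $u \in E_{0}$ reduces to $P_{\mathcal{BD}(G)}u = \pi_{\mathcal{BD}(G)}^{\ast}\pi_{\mathcal{BD}(G)}u \in E_{0}$, which matches the first requirement in the definition of $h$. Moreover, the unitary identity $\stackrel{\bullet}{G}\stackrel{\bullet}{D} = \mathrm{id}_{\mathcal{BD}(D)}$ gives $\pi_{\mathcal{BD}(D)}^{\ast}\stackrel{\bullet}{G}(\stackrel{\bullet}{D}\pi_{\mathcal{BD}(D)}w) = P_{\mathcal{BD}(D)}w$, so the second condition defining $h$, evaluated at $(\pi_{\mathcal{BD}(G)}u, \stackrel{\bullet}{D}\pi_{\mathcal{BD}(D)}w)$, reads $K^{\diamond}K P_{\mathcal{BD}(G)}u - L^{\diamond}P_{\mathcal{BD}(D)}w \in E$.

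The central step is the identity
\begin{equation*}
K^{\diamond}Ku - L^{\diamond}w \;=\; K^{\diamond}K P_{\mathcal{BD}(G)}u - L^{\diamond}P_{\mathcal{BD}(D)}w + Dw_{0},
\end{equation*}
where $Dw_{0} \in E$. The term $K^{\diamond}K u_{0}$ vanishes because $K|_{\mathcal{D}(G_{0})} = 0$ by hypothesis. The remaining ingredient is to identify $L^{\diamond}w_{0}$ with $-Dw_{0} \in E \subseteq E_{0}'$: for $v \in E_{0}$, using $L \subseteq G$ together with $w_{0} \in \mathcal{D}(D_{0})$ and $G = -(D_{0})^{\ast}$, I compute
\begin{equation*}
(L^{\diamond}w_{0})(v) = \langle w_{0} \,|\, Gv \rangle_{H} = -\langle D_{0}w_{0} \,|\, v \rangle_{E} = \langle -Dw_{0} \,|\, v \rangle_{E},
\end{equation*}
so that $L^{\diamond}w_{0} = -Dw_{0}$ as elements of $E_{0}'$, sitting inside $E$.

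With this identity in hand, both directions of the stated equivalence follow immediately: the condition $K^{\diamond}Ku - L^{\diamond}w \in E$, modulo the term $Dw_{0} \in E$, is the same as $K^{\diamond}K P_{\mathcal{BD}(G)}u - L^{\diamond}P_{\mathcal{BD}(D)}w \in E$, and the $E_{0}$-condition from the first step supplies the missing piece. The main obstacle I anticipate is the careful bookkeeping of the identification $E \hookrightarrow E_{0}'$ via the Riesz map and the sign/domain details needed for the duality identity $L^{\diamond}w_{0} = -Dw_{0}$; the rest of the argument is a direct consequence of the orthogonal decompositions and the hypothesis.
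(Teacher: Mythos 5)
Your proposal is correct and follows essentially the same route as the paper's own proof: the same decompositions $u=u_{0}+P_{\mathcal{BD}(G)}u$, $w=w_{0}+P_{\mathcal{BD}(D)}w$, the same identification $L^{\diamond}w_{0}=-D_{0}w_{0}\in E$ via $G=-(D_{0})^{\ast}$, and the same key identity relating $K^{\diamond}Ku-L^{\diamond}w$ to $K^{\diamond}KP_{\mathcal{BD}(G)}u-L^{\diamond}P_{\mathcal{BD}(D)}w$ modulo $D_{0}w_{0}\in E$. The only cosmetic difference is that you isolate the $E_{0}$-membership and the $\stackrel{\bullet}{G}\stackrel{\bullet}{D}=1$ reduction as separate preliminary observations, which the paper folds into the two directions of the equivalence.
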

\begin{proof}
Let $(u,w)\in\mathcal{D}(A).$ Then we know by \prettyref{lem:D_extends},
that $(u,w)\in\mathcal{D}(G)\times\mathcal{D}(D).$ We decompose $u=u_{0}+P_{\mathcal{BD}(G)}u,$
where $u\in\mathcal{D}(G_{0})\subseteq E_{0}.$ Since $u,u_{0}\in E_{0}$
we get that $\pi_{\mathcal{BD}(G)}^{\ast}\pi_{\mathcal{BD}(G)}u=P_{\mathcal{BD}(G)}u\in E_{0}.$
In the same way we decompose $w=w_{0}+P_{\mathcal{BD}(D)}w,$ where
$w_{0}\in\mathcal{D}(D_{0}).$ Since 
\[
\left(L^{\diamond}w_{0}\right)(z)=\langle w_{0}|Lz\rangle_{H}=\langle w_{0}|Gz\rangle_{H}=\langle-D_{c}w_{0}|z\rangle_{E}
\]
for each $z\in E_{0},$ we obtain $L^{\diamond}w=-D_{0}w_{0}\in E$
and thus, 
\begin{align}
K^{\diamond}K\pi_{\mathcal{BD}(G)}^{\ast}\pi_{\mathcal{BD}(G)}u-L^{\diamond}\pi_{\mathcal{BD}(D)}^{\ast}\stackrel{\bullet}{G}\stackrel{\bullet}{D}\pi_{\mathcal{BD}(D)}w & =K^{\diamond}KP_{\mathcal{BD}(G)}u-L^{\diamond}P_{\mathcal{BD}(D)}w\nonumber \\
 & =K^{\diamond}K\left(u-u_{0}\right)-L^{\diamond}(w-w_{0})\nonumber \\
 & =K^{\diamond}Ku-L^{\diamond}w-D_{c}w_{0}\in E,\label{eq:decomp}
\end{align}
where we have used $\stackrel{\bullet}{G}\stackrel{\bullet}{D}=1,$
$Ku_{0}=0$ and $(u,w)\in\mathcal{D}(A).$ Thus, we have $(\pi_{\mathcal{BD}(G)}u,\stackrel{\bullet}{D}\pi_{\mathcal{BD}(D)}w)\in h.$
\\
Assume now, that $(u,w)\in\mathcal{D}(G)\times\mathcal{D}(D)$ with
$(\pi_{\mathcal{BD}(G)}u,\stackrel{\bullet}{D}\pi_{\mathcal{BD}(D)}w)\in h.$
Since $u_{0}\coloneqq u-P_{\mathcal{BD}(G)}u\in\mathcal{D}(G_{0})\subseteq E_{0}$
and by assumption $P_{\mathcal{BD}(G)}u=\pi_{\mathcal{BD}(G)}^{\ast}\pi_{\mathcal{BD}(G)}u\in E_{0}$,
we infer that $u\in E_{0}.$ Moreover, decomposing $w=w_{0}+P_{\mathcal{BD}(D)}w$
with $w\in\mathcal{D}(D_{0})$ and using $D_{0}w_{0}=-L^{\diamond}w_{0}$
we derive that 
\[
K^{\diamond}Ku-L^{\diamond}w=K^{\diamond}K\pi_{\mathcal{BD}(G)}^{\ast}\pi_{\mathcal{BD}(G)}u-L^{\diamond}\pi_{\mathcal{BD}(D)}^{\ast}\stackrel{\bullet}{G}\stackrel{\bullet}{D}\pi_{\mathcal{BD}(D)}w+D_{c}w_{0}\in E
\]
by \prettyref{eq:decomp} and $(\pi_{\mathcal{BD}(G)}u,\stackrel{\bullet}{D}\pi_{\mathcal{BD}(D)}w)\in h$.
Hence, $(u,w)\in\mathcal{D}(A)$. 
\end{proof}
Although we already know that $h$ in the previous Lemma is maximal
monotone by \prettyref{thm:Staffans} and \prettyref{thm:char_max_mon},
we will present a proof for this fact, which does not require these
Theorems. 
\begin{prop}
\label{prop:h_max_mon}Assume (i) in \prettyref{thm:main} holds and
let $h\subseteq\mathcal{BD}(G)\oplus\mathcal{BD}(G)$ be as in \prettyref{lem:h}.
Then $h$ is linear and maximal monotone.\end{prop}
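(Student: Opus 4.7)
The proof splits into three tasks: linearity, monotonicity, and surjectivity of $1+h$. Linearity is immediate because both defining conditions are linear in $(x,y)$. For monotonicity, my plan is to invoke the abstract Green's formula
\[
\langle \pi_{\mathcal{BD}(G)} u\mid \stackrel{\bullet}{D}\pi_{\mathcal{BD}(D)} w\rangle_{\mathcal{BD}(G)} = \langle u\mid Dw\rangle_E + \langle Gu\mid w\rangle_H \qquad (u\in\mathcal{D}(G),\ w\in\mathcal{D}(D)),
\]
which is a direct consequence of the orthogonal decompositions $\mathcal{D}_G=\mathcal{D}_{G_0}\oplus\mathcal{BD}(G)$, $\mathcal{D}_D=\mathcal{D}_{D_0}\oplus\mathcal{BD}(D)$ together with the adjoint identities $G=-(D_0)^*$ and $D=-(G_0)^*$. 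Given $(x,y)\in h$, setting $u:=\pi_{\mathcal{BD}(G)}^{\ast}x\in E_0$ and $w:=\pi_{\mathcal{BD}(D)}^{\ast}\stackrel{\bullet}{G}y$, \prettyref{lem:D_extends} gives $Dw=K^{\diamond}Ku-L^{\diamond}w$, and $L\subseteq G$ gives $Gu=Lu$. Unfolding the duality pairings via the formulas $\langle Dw\mid u\rangle_E = |Ku|_U^2-\langle w\mid Lu\rangle_H$ and $\langle Gu\mid w\rangle_H=\langle Lu\mid w\rangle_H$ yields $\langle x\mid y\rangle_{\mathcal{BD}(G)}=|Ku|_U^2\geq 0$, and linearity of $h$ upgrades this to monotonicity.

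For maximality, by \prettyref{thm:minty} it suffices to prove $(1+h)[\mathcal{BD}(G)]=\mathcal{BD}(G)$. Fix $f\in\mathcal{BD}(G)$ and set $W:=\pi_{\mathcal{BD}(D)}^{\ast}\stackrel{\bullet}{G}f\in H$. I would work on the Hilbert space $V:=E_0\cap\mathcal{BD}(G)$, which is closed in $E_0$ since $E_0\hookrightarrow\mathcal{D}_G$ continuously (via $L\subseteq G$) and $\mathcal{BD}(G)$ is closed in $\mathcal{D}_G$. The sesquilinear form
\[
a(v,\phi):=\langle v\mid \phi\rangle_E+\langle Lv\mid L\phi\rangle_H+\langle Kv\mid K\phi\rangle_U
\]
is a genuine inner product on $V$ whose induced norm is equivalent to the $E_0$-norm (the hypothesis that $\binom{L}{K}$ is closed makes the $E_0$-norm equivalent to $|\cdot|_E+|L\cdot|_H+|K\cdot|_U$). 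Applying Riesz (or Lax--Milgram) with the continuous linear functional $\phi\mapsto\langle f\mid\phi\rangle_E+\langle W\mid L\phi\rangle_H$, I obtain a unique $v\in V$ with $a(v,\phi)=\langle f\mid\phi\rangle_E+\langle W\mid L\phi\rangle_H$ for all $\phi\in V$. I then set $x:=v$, $y:=f-v$; trivially $x+y=f$ and $\pi_{\mathcal{BD}(G)}^{\ast}x=v\in E_0$ by construction.

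The technical heart of the argument (and the main obstacle) is verifying $\Lambda:=K^{\diamond}Kv+L^{\diamond}Lv-L^{\diamond}W\in E$, which, after inserting $\pi_{\mathcal{BD}(D)}^{\ast}\stackrel{\bullet}{G}y=W-Lv$, is exactly the remaining condition for $(x,y)\in h$. The Riesz identity only controls $\Lambda$ on $V$, so I would test $\Lambda$ against the algebraic decomposition $E_0=V\oplus\mathcal{D}(G_0)$, which is valid because $\mathcal{D}(G_0)\subseteq E_0$ (by $G_0\subseteq L$). For $\phi_V\in V$, the Riesz identity rearranges to $\Lambda(\phi_V)=\langle f-v\mid\phi_V\rangle_E$. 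For $\phi_0\in\mathcal{D}(G_0)$, the relations $K\phi_0=0$ and $L\phi_0=G_0\phi_0$ reduce $\Lambda(\phi_0)$ to $\langle Lv-W\mid G_0\phi_0\rangle_H$; since $Lv=Gv$ and $W$ both lie in $\mathcal{BD}(D)\subseteq\mathcal{D}(D)=\mathcal{D}(-(G_0)^*)$, this rewrites as $\langle -D(Lv-W)\mid\phi_0\rangle_E$, and the unitarity identities $DLv=\stackrel{\bullet}{D}\stackrel{\bullet}{G}v=v$, $DW=\stackrel{\bullet}{D}\stackrel{\bullet}{G}f=f$ collapse it to $\langle f-v\mid\phi_0\rangle_E$. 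Hence $\Lambda=f-v\in E$, finishing the verification. The unusual right-hand side of the Riesz problem is precisely what this bookkeeping demands: dropping the $\langle f\mid\phi\rangle_E$ summand would make the two expressions for $\Lambda$ on $V$ and on $\mathcal{D}(G_0)$ disagree by exactly $\langle f\mid\phi\rangle_E$, so including it is what encodes the boundary datum $f$.
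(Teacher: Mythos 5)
Your proposal is correct and follows essentially the same route as the paper: the monotonicity computation is the paper's inline calculation packaged as a Green's identity, and the surjectivity step uses the very same Riesz functional $\phi\mapsto\langle\pi_{\mathcal{BD}(G)}^{\ast}f|\phi\rangle_{E}+\langle G\pi_{\mathcal{BD}(G)}^{\ast}f|L\phi\rangle_{H}$ together with Minty's theorem. The only (harmless) organizational difference is that you solve the variational problem on the closed subspace $V=E_{0}\cap\mathcal{BD}(G)$ and then check the condition $\Lambda\in E$ separately on the two summands of $E_{0}=V\oplus\mathcal{D}(G_{0})$, whereas the paper solves it on all of $E_{0}$ and deduces a posteriori, by testing against $\mathcal{D}(G_{0})$, that the solution already lies in $\mathcal{BD}(G)$; both variants produce the same element and rely on the same integration-by-parts identities.
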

\begin{proof}
The linearity of $h$ is clear due to the linearity of all operators
involved. Let now $(x,y)\in h.$ Then we compute 
\begin{align*}
\Re\langle x|y\rangle_{\mathcal{BD}(G)} & =\Re\langle\pi_{\mathcal{BD}(G)}^{\ast}x|\pi_{\mathcal{BD}(G)}^{\ast}y\rangle_{E}+\Re\langle G\pi_{\mathcal{BD}(G)}^{\ast}x|G\pi_{\mathcal{BD}(G)}^{\ast}y\rangle_{E}\\
 & =\Re\langle\pi_{\mathcal{BD}(G)}^{\ast}x|\pi_{\mathcal{BD}(G)}^{\ast}y\rangle_{E}+\Re\langle L\pi_{\mathcal{BD}(G)}^{\ast}x|G\pi_{\mathcal{BD}(G)}^{\ast}y\rangle_{E}\\
 & =\Re\langle\pi_{\mathcal{BD}(G)}^{\ast}x|\pi_{\mathcal{BD}(G)}^{\ast}y\rangle_{E}+\Re\left(L^{\diamond}G\pi_{\mathcal{BD}(G)}^{\ast}y\right)(\pi_{\mathcal{BD}(G)}^{\ast}x)\\
 & =\Re\left(L^{\diamond}\pi_{\mathcal{BD}(D)}^{\ast}\stackrel{\bullet}{G}y+\pi_{\mathcal{BD}(G)}^{\ast}y\right)(\pi_{\mathcal{BD}(G)}^{\ast}x)\\
 & =\Re\left(L^{\diamond}\pi_{\mathcal{BD}(D)}^{\ast}\stackrel{\bullet}{G}y+\pi_{\mathcal{BD}(G)}^{\ast}y-K^{\diamond}K\pi_{\mathcal{BD}(G)}^{\ast}x\right)(\pi_{\mathcal{BD}(G)}^{\ast}x)+\\
 & \quad+\langle K\pi_{\mathcal{BD}(G)}^{\ast}x|K\pi_{\mathcal{BD}(G)}^{\ast}x\rangle_{U}.
\end{align*}

Since $\pi_{\mathcal{BD}(G)}^{\ast}x\in E_{0}$ and $K^{\diamond}K\pi_{\mathcal{BD}(G)}^{\ast}x-L^{\diamond}\pi_{\mathcal{BD}(D)}^{\ast}\stackrel{\bullet}{G}y\in E$,
we get from \prettyref{lem:D_extends} $K^{\diamond}K\pi_{\mathcal{BD}(G)}^{\ast}x-L^{\diamond}\pi_{\mathcal{BD}(D)}^{\ast}\stackrel{\bullet}{G}y=D\pi_{\mathcal{BD}(D)}^{\ast}\stackrel{\bullet}{G}y=\pi_{\mathcal{BD}(G)}^{\ast}y,$
since $\stackrel{\bullet}{D}\stackrel{\bullet}{G}=1.$ Thus, we obtain
\begin{align*}
\Re\langle x|y\rangle_{\mathcal{BD}(G)} & =\Re\left(L^{\diamond}\pi_{\mathcal{BD}(D)}^{\ast}\stackrel{\bullet}{G}y+\pi_{\mathcal{BD}(G)}^{\ast}y-K^{\diamond}K\pi_{\mathcal{BD}(G)}^{\ast}x\right)(\pi_{\mathcal{BD}(G)}^{\ast}x)+\\
 & \quad+\langle K\pi_{\mathcal{BD}(G)}^{\ast}x|K\pi_{\mathcal{BD}(G)}^{\ast}x\rangle_{U}.\\
 & =\langle K\pi_{\mathcal{BD}(G)}^{\ast}x|K\pi_{\mathcal{BD}(G)}^{\ast}x\rangle_{U}\geq0.
\end{align*}
This proves the monotonicity of $h$. To show that $h$ is maximal
monotone, it suffices to prove $(1+h)[\mathcal{BD}(G)]=\mathcal{BD}(G)$
by \prettyref{thm:minty}. Let $f\in\mathcal{BD}(G)$ and consider
the linear functional 
\[
E_{0}\ni x\mapsto\langle\pi_{\mathcal{BD}(G)}^{\ast}f|x\rangle_{E}+\langle G\pi_{\mathcal{BD}(G)}^{\ast}f|Lx\rangle_{H}.
\]
This functional is continuous and thus there is $w\in E_{0}$ with%
\footnote{Recall that the norm on $E_{0}$ is equivalent to the graph norm of
$\left(\begin{array}{c}
L\\
K
\end{array}\right)$.%
} 
\begin{equation}
\forall x\in E_{0}:\langle w|x\rangle_{E}+\langle Lw|Lx\rangle_{H}+\langle Kw|Kx\rangle_{U}=\langle\pi_{\mathcal{BD}(G)}^{\ast}f|x\rangle_{E}+\langle G\pi_{\mathcal{BD}(G)}^{\ast}f|Lx\rangle_{H}.\label{eq:riesz}
\end{equation}
In particular, for $x\in\mathcal{D}(G_{0})\subseteq E_{0}$ we obtain
that 
\begin{align*}
\langle Gw|G_{c}x\rangle_{H} & =\langle Lw|Lx\rangle_{H}\\
 & =\langle\pi_{\mathcal{BD}(G)}^{\ast}f|x\rangle_{E}+\langle G\pi_{\mathcal{BD}(G)}^{\ast}f|Lx\rangle_{H}-\langle w|x\rangle_{E}\\
 & =\langle\pi_{\mathcal{BD}(G)}^{\ast}f|x\rangle_{E}+\langle G\pi_{\mathcal{BD}(G)}^{\ast}f|G_{c}x\rangle_{H}-\langle w|x\rangle_{E}\\
 & =\langle\pi_{\mathcal{BD}(G)}^{\ast}f|x\rangle_{E}-\langle DG\pi_{\mathcal{BD}(G)}^{\ast}f|x\rangle_{E}-\langle w|x\rangle_{E}\\
 & =-\langle w|x\rangle_{E},
\end{align*}
where we have used $Kx=0$ and $DG\pi_{\mathcal{BD}(G)}^{\ast}f=\pi_{\mathcal{BD}(G)}^{\ast}f.$
The latter gives $w\in\mathcal{D}(D)$ and $DGw=w$ or, in other words,
$P_{\mathcal{BD}(G)}w=w.$ Set $u\coloneqq\pi_{\mathcal{BD}(G)}w$
and $v=f-u$. It is left to show that $(u,v)\in h.$ First, note that
$\pi_{\mathcal{BD}(G)}^{\ast}u=P_{\mathcal{BD}(G)}w=w\in E_{0}.$
Moreover, we compute for $x\in E_{0}$ using \prettyref{eq:riesz}
\begin{align*}
\left(K^{\diamond}K\pi_{\mathcal{BD}(G)}^{\ast}u-L^{\diamond}\pi_{\mathcal{BD}(D)}^{\ast}\stackrel{\bullet}{G}v\right)(x) & =\langle K\pi_{\mathcal{BD}(G)}^{\ast}u|Kx\rangle_{U}-\langle\pi_{\mathcal{BD}(D)}^{\ast}\stackrel{\bullet}{G}v|Lx\rangle_{H}\\
 & =\langle Kw|Kx\rangle_{U}-\langle G\pi_{\mathcal{BD}(G)}^{\ast}f|Lx\rangle_{H}+\langle G\pi_{\mathcal{BD}(G)}^{\ast}u|Lx\rangle_{H}\\
 & =\langle Kw|Kx\rangle_{U}-\langle G\pi_{\mathcal{BD}(G)}^{\ast}f|Lx\rangle_{H}+\langle Lw|Lx\rangle_{H}\\
 & =\langle\pi_{\mathcal{BD}(G)}^{\ast}f|x\rangle_{E}-\langle w|x\rangle_{E},
\end{align*}
which gives $K^{\diamond}K\pi_{\mathcal{BD}(G)}^{\ast}u-L^{\diamond}\pi_{\mathcal{BD}(D)}^{\ast}\stackrel{\bullet}{G}v=\pi_{\mathcal{BD}(G)}^{\ast}f-w\in E.$
This completes the proof.
\end{proof}
The only thing, which is left to show is that $h$ is selfadjoint. 
\begin{prop}
Assume (i) in \prettyref{thm:main} holds and let $h\subseteq\mathcal{BD}(G)\oplus\mathcal{BD}(G)$
be as in \prettyref{lem:h}. Then $h$ is selfadjoint.\end{prop}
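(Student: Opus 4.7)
The plan is to argue by maximality: since by \prettyref{prop:h_max_mon} the relation $h$ is linear, closed and maximal monotone, \prettyref{rem:max_mon}(b) (applied in the Hilbert space $\mathcal{BD}(G)$) shows that $h^{\ast}$ is again monotone. Hence, once the inclusion $h \subseteq h^{\ast}$ is established, $h^{\ast}$ is a monotone extension of the maximal monotone $h$, which forces $h = h^{\ast}$.

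By linearity of $h$, verifying $h \subseteq h^{\ast}$ reduces to proving the symmetry identity
\[
\langle y_1 | x_2 \rangle_{\mathcal{BD}(G)} = \langle x_1 | y_2 \rangle_{\mathcal{BD}(G)} \qquad \text{for all } (x_1, y_1), (x_2, y_2) \in h.
\]
To obtain this I would polarise the monotonicity computation from \prettyref{prop:h_max_mon}. The key input is \prettyref{lem:D_extends}: for each $(x, y) \in h$ the pair $(\pi_{\mathcal{BD}(G)}^{\ast} x, \pi_{\mathcal{BD}(D)}^{\ast} \stackrel{\bullet}{G} y)$ satisfies its hypotheses, so
\[
K^{\diamond} K \pi_{\mathcal{BD}(G)}^{\ast} x - L^{\diamond} \pi_{\mathcal{BD}(D)}^{\ast} \stackrel{\bullet}{G} y = D \pi_{\mathcal{BD}(D)}^{\ast} \stackrel{\bullet}{G} y = \pi_{\mathcal{BD}(G)}^{\ast} y,
\]
where the last equality uses $\stackrel{\bullet}{D} \stackrel{\bullet}{G} = 1$ on $\mathcal{BD}(G)$.

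Applying this identity with $(x, y) = (x_1, y_1)$ and evaluating the resulting element of $E \subseteq E_0'$ at the test vector $\pi_{\mathcal{BD}(G)}^{\ast} x_2 \in E_0$, then using $L \subseteq G$ to rewrite $L \pi_{\mathcal{BD}(G)}^{\ast} x_2 = G \pi_{\mathcal{BD}(G)}^{\ast} x_2$ as an element of $\mathcal{BD}(D) \subseteq H$, I expect the same short calculation as in \prettyref{prop:h_max_mon} (now kept complex-valued) to yield
\[
\langle \pi_{\mathcal{BD}(G)}^{\ast} y_1 | \pi_{\mathcal{BD}(G)}^{\ast} x_2 \rangle_E = \langle K \pi_{\mathcal{BD}(G)}^{\ast} x_1 | K \pi_{\mathcal{BD}(G)}^{\ast} x_2 \rangle_U - \langle G \pi_{\mathcal{BD}(G)}^{\ast} y_1 | G \pi_{\mathcal{BD}(G)}^{\ast} x_2 \rangle_H.
\]
By definition of the graph inner product on $\mathcal{BD}(G)$ the $G$-term cancels when added on both sides, leaving
\[
\langle y_1 | x_2 \rangle_{\mathcal{BD}(G)} = \langle K \pi_{\mathcal{BD}(G)}^{\ast} x_1 | K \pi_{\mathcal{BD}(G)}^{\ast} x_2 \rangle_U.
\]
Since the right-hand side is Hermitian-symmetric in $(x_1, x_2)$, swapping indices and taking conjugates yields $\langle y_1 | x_2 \rangle_{\mathcal{BD}(G)} = \overline{\langle y_2 | x_1 \rangle_{\mathcal{BD}(G)}} = \langle x_1 | y_2 \rangle_{\mathcal{BD}(G)}$, which is the required symmetry.

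The main obstacle I anticipate is bookkeeping rather than any conceptual hurdle: one has to carefully distinguish the several incarnations of $\pi_{\mathcal{BD}(D)}^{\ast}$ (as an embedding into $\mathcal{D}_D$, as an element of $H$, and when composed with $L^{\diamond}$ as an element of $E_0'$), and keep track of when an object a priori living in $E_0'$ actually lies in $E$ via \prettyref{lem:D_extends}. Once those identifications are made explicit, the proof is essentially the polarised form of the real-part identity from \prettyref{prop:h_max_mon}, with the $K^{\diamond}K$-boundary term now delivering symmetry in place of nonnegativity.
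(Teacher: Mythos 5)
Your proposal is correct and follows essentially the same route as the paper: both reduce the claim to $h\subseteq h^{\ast}$ via maximal monotonicity of $h$ and monotonicity of $h^{\ast}$, and both use \prettyref{lem:D_extends} to identify $K^{\diamond}K\pi_{\mathcal{BD}(G)}^{\ast}x-L^{\diamond}\pi_{\mathcal{BD}(D)}^{\ast}\stackrel{\bullet}{G}y$ with $\pi_{\mathcal{BD}(G)}^{\ast}y$, arriving at the identity $\langle y_{1}|x_{2}\rangle_{\mathcal{BD}(G)}=\langle K\pi_{\mathcal{BD}(G)}^{\ast}x_{1}|K\pi_{\mathcal{BD}(G)}^{\ast}x_{2}\rangle_{U}$ whose Hermitian symmetry yields the conclusion. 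The only difference is presentational (you evaluate the $E$-valued element against a test vector and then add the $G$-term, while the paper expands the graph inner product first), so nothing substantive separates the two arguments.
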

\begin{proof}
We note that $h^{\ast}$ is monotone, since $h$ is maximal monotone
by \prettyref{prop:h_max_mon} and \prettyref{rem:max_mon}. Thus,
due to the maximality of $h,$ it suffices to prove $h\subseteq h^{\ast}.$
For doing so, let $(u,v),(x,y)\in h.$ Then 
\begin{align*}
\langle y|u\rangle_{\mathcal{BD}(G)} & =\langle\pi_{\mathcal{BD}(G)}^{\ast}y|\pi_{\mathcal{BD}(G)}^{\ast}u\rangle_{E}+\langle G\pi_{\mathcal{BD}(G)}^{\ast}y|G\pi_{\mathcal{BD}(G)}^{\ast}u\rangle_{H}\\
 & =\langle\pi_{\mathcal{BD}(G)}^{\ast}y|\pi_{\mathcal{BD}(G)}^{\ast}u\rangle_{E}+\langle\pi_{\mathcal{BD}(D)}^{\ast}\stackrel{\bullet}{G}y|L\pi_{\mathcal{BD}(G)}^{\ast}u\rangle_{H}\\
 & =\langle\pi_{\mathcal{BD}(G)}^{\ast}y|\pi_{\mathcal{BD}(G)}^{\ast}u\rangle_{E}+\left(L^{\diamond}\pi_{\mathcal{BD}(D)}^{\ast}\stackrel{\bullet}{G}y-K^{\diamond}K\pi_{\mathcal{BD}(G)}^{\ast}x\right)(\pi_{\mathcal{BD}(G)}^{\ast}u)+\\
 & \quad+\langle K\pi_{\mathcal{BD}(G)}^{\ast}x|K\pi_{\mathcal{BD}(G)}^{\ast}u\rangle_{U}.
\end{align*}
Using that $\pi_{\mathcal{BD}(G)}^{\ast}x\in E_{0}$ and $L^{\diamond}\pi_{\mathcal{BD}(D)}^{\ast}\stackrel{\bullet}{G}y-K^{\diamond}K\pi_{\mathcal{BD}(G)}^{\ast}x\in E$
we have $L^{\diamond}\pi_{\mathcal{BD}(D)}^{\ast}\stackrel{\bullet}{G}y-K^{\diamond}K\pi_{\mathcal{BD}(G)}^{\ast}x=-D\pi_{\mathcal{BD}(D)}^{\ast}\stackrel{\bullet}{G}y=-\pi_{\mathcal{BD}(D)}^{\ast}y$
according to \prettyref{lem:D_extends}. Thus, 
\[
\langle y|u\rangle_{\mathcal{BD}(G)}=\langle K\pi_{\mathcal{BD}(G)}^{\ast}x|K\pi_{\mathcal{BD}(G)}^{\ast}u\rangle_{U}.
\]
Repeating this argumentation and interchanging $y$ and $x$ as well
as $u$ and $v$, we get that 
\[
\langle v|x\rangle_{\mathcal{BD}(G)}=\langle K\pi_{\mathcal{BD}(G)}u|K\pi_{\mathcal{BD}(G)}^{\ast}x\rangle_{U}
\]
and hence, $\langle y|u\rangle_{\mathcal{BD}(G)}=\langle x|v\rangle_{\mathcal{BD}(G)}$,
which implies $h\subseteq h^{\ast}.$ 
\end{proof}
This completes the proof of (i)$\Rightarrow$(ii) in \prettyref{thm:main}.
To show the converse implication, we need the following well-known
result for selfadjoint relations, which for sake of completeness,
will be proved.
\begin{thm}[{\cite[Theorem 5.3]{Arens1961}}]
 Let $Y$ be a Hilbert space and $C\subseteq Y\oplus Y$ a selfadjoint
relation. Let $U\coloneqq\overline{[Y]C}$. Then there exists a selfadjoint
operator $S:[Y]C\subseteq U\to U$ such that 
\[
C=S\oplus\left(\{0\}\times U^{\bot}\right).
\]
\end{thm}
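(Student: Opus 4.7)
The key fact driving the whole argument is that for any selfadjoint relation $C$, the multi-valued part $C[\{0\}]$ coincides with $U^{\bot}$. The plan is to establish this identity first, then use the orthogonal projection $P: Y\to U$ onto $U$ to collapse the multi-valued part and extract a single-valued operator $S$ on $U$, and finally verify that $S$ is selfadjoint as an operator.

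First I would prove $C[\{0\}]=U^{\bot}$ directly from $C=C^{\ast}$ and the inner-product characterization of $C^{\ast}$ in the Remark: $(0,y)\in C$ is equivalent to $\langle y|u\rangle=\langle 0|v\rangle=0$ for all $(u,v)\in C$, that is, $y\in([Y]C)^{\bot}=U^{\bot}$. Having this, I would define $S: [Y]C\to U$ by $Sx\coloneqq Py$, where $y$ is any element with $(x,y)\in C$. Well-definedness is automatic, since any two such $y$'s differ by an element of $C[\{0\}]=U^{\bot}\subseteq\ker P$, and linearity of $S$ is inherited from the (automatic) linearity of $C$.

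The decomposition $C=S\oplus(\{0\}\times U^{\bot})$ is then essentially formal. The inclusion $\supseteq$ is immediate because $S\subseteq C$ by construction and $\{0\}\times U^{\bot}=\{0\}\times C[\{0\}]\subseteq C$; for $\subseteq$ I would write $(x,y)=(x,Sx)+(0,(I-P)y)$ for $(x,y)\in C$ and observe that $(0,(I-P)y)=(x,y)-(x,Sx)\in C$ forces $(I-P)y\in C[\{0\}]=U^{\bot}$. Since the range of $S$ lies in $U$ and $[Y]C$ is dense in $U$ by definition, $S$ is a densely defined linear operator on the Hilbert space $U$.

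The main step, and the only place where care is needed, is verifying that $S$ is selfadjoint \emph{as an operator on} $U$. Symmetry transfers from $C=C^{\ast}$: for $(x_i,y_i)\in C$ with $x_i\in[Y]C\subseteq U$, selfadjointness yields $\langle y_1|x_2\rangle=\langle x_1|y_2\rangle$, and since $x_i\in U$ one may replace each $y_i$ by $Py_i=Sx_i$ without changing these inner products. For maximality I would take $(x,z)\in U\times U$ with $\langle Sa|x\rangle_U=\langle a|z\rangle_U$ for all $a\in[Y]C$ and show $(x,z)\in C$: any $(a,b)\in C$ satisfies $b=Sa+c$ with $c\in U^{\bot}$, so $\langle b|x\rangle=\langle Sa|x\rangle=\langle a|z\rangle$ because $x\in U$. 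Hence $(x,z)\in C^{\ast}=C$, which forces $x\in[Y]C=\dom(S)$, and the decomposition already established then gives $z-Sx\in U^{\bot}\cap U=\{0\}$, i.e.\ $z=Sx$. The only subtlety is the bookkeeping distinguishing inner products in $Y$ from those in $U$, but since $U$ is a closed subspace these agree on $U\times U$ and no genuine obstacle arises.
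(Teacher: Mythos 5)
Your proof is correct and follows essentially the same route as the paper: identify the multivalued part $C[\{0\}]$ with $U^{\bot}$, split off the single-valued operator part $S$ on $U$, verify the decomposition $C=S\oplus(\{0\}\times U^{\bot})$, and transfer selfadjointness from $C=C^{\ast}$ to $S$. The only cosmetic difference is that you define $S$ by projecting the second component onto $U$, whereas the paper defines $S$ as the restriction of $C$ to pairs in $U\oplus U$; these yield the same operator, and your direct computation of $C[\{0\}]=U^{\bot}$ is, if anything, slightly cleaner than the paper's detour through $\overline{[Y]C^{\ast}}=\left(C[\{0\}]\right)^{\bot}$.
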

\begin{proof}
Due to the selfadjointness of $C,$ we have that
\begin{equation}
U=\overline{[Y]C}=\overline{[Y]C^{\ast}}=\left(C[\{0\}]\right)^{\bot}.\label{eq:U}
\end{equation}
We define the relation $S\coloneqq\left\{ (u,v)\in U\oplus U\,|\,(u,v)\in C\right\} $
and prove that $S$ is a mapping. First we note that $S$ is linear
as $C$ and $U$ are linear. Thus, it suffices to show that $(0,v)\in S$
for some $v\in U$ implies $v=0.$ Indeed, if $(0,v)\in S,$ we have
$(0,v)\in C$ and hence, $v\in C[\{0\}]=U^{\bot}$. Thus, $v\in U\cap U^{\bot}=\{0\}$
and hence, $S$ is a mapping. Next, we show that $C=S\oplus\left(\{0\}\times U^{\bot}\right).$
First note that $S\subseteq C$ as well as $\{0\}\times U^{\bot}\subseteq C$
by definition and hence, $S\oplus(\{0\}\times U^{\bot})\subseteq C$
due to the linearity of $C$. Let now $(u,v)\in C$ and decompose
$v=v_{0}+v_{1}$ for $v_{0}\in U,v_{1}\in U^{\bot}=C[\{0\}].$ Hence,
$(0,v_{1})\in C$ and $\left(u,v_{0}\right)=(u,v)-(0,v_{1})\in C.$
Moreover, $u\in U$ by \prettyref{eq:U} and thus, we derive $(u,v_{0})\in S$
and consequently, $(u,v)=(u,v_{0})+(0,v_{1})\in S\oplus\left(\{0\}\times U^{\bot}\right).$
Finally, we show that $S$ is selfadjoint. Using that $C=S\oplus\left(\{0\}\times U^{\bot}\right)$,
we obtain that 
\begin{align*}
(x,y)\in S^{\ast} & \Leftrightarrow x,y\in U\wedge\forall(u,v_{0})\in S:\langle v_{0}|x\rangle_{U}=\langle u|y\rangle_{U}\\
 & \Leftrightarrow x,y\in U\wedge\forall(u,v_{0})\in S,v_{1}\in U^{\bot}:\langle v_{0}+v_{1}|x\rangle_{Y}=\langle v_{0}|x\rangle_{Y}=\langle u|y\rangle_{Y}\\
 & \Leftrightarrow x,y\in U\wedge\forall(u,v)\in C:\langle v|x\rangle_{Y}=\langle u|y\rangle_{Y}\\
 & \Leftrightarrow x,y\in U\wedge(x,y)\in C^{\ast}=C\\
 & \Leftrightarrow(x,y)\in S,
\end{align*}
which gives $S=S^{\ast},$ i.e. $S$ is selfadjoint.\end{proof}
\begin{rem}
It is obvious that in case of a monotone selfadjoint relation $C$
in the latter theorem, the operator $S$ is monotone, too.\end{rem}
\begin{lem}
\label{lem:L_and_K}Assume (ii) in \prettyref{thm:main} and let $h=S\oplus\left(\{0\}\times U^{\bot}\right)$
with $U\coloneqq\overline{[\mathcal{BD}(G)]h}$ and $S:\left[\mathcal{BD}(G)\right]h\subseteq U\to U$
selfadjoint. We define the vectorspace 
\[
E_{0}\coloneqq\left\{ u\in\mathcal{D}(G)\,|\,\pi_{\mathcal{BD}(G)}u\in\mathcal{D}(\sqrt{S})\right\} \subseteq E
\]
and the operators $L:E_{0}\to H,w\mapsto Gw$ and $K:E_{0}\to U,u\mapsto\sqrt{S}\pi_{\mathcal{BD}(G)}u.$
Then the operator $\left(\begin{array}{c}
L\\
K
\end{array}\right):E_{0}\subseteq E\to H\oplus U$ is closed and $E_{0},U,L$ and $K$ satisfy the hypothesis, if we
equip $E_{0}$ with the graph norm of $\left(\begin{array}{c}
L\\
K
\end{array}\right)$.\end{lem}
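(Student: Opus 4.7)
The plan is to realize $E_0$ concretely as the preimage $\{u\in\mathcal{D}_G\mid \pi_{\mathcal{BD}(G)}u\in\mathcal{D}(\sqrt{S})\}$ and to exploit the fact that $\sqrt{S}$ is a closed, densely defined, selfadjoint operator on $U$. The latter is ensured because $S$ is selfadjoint (by the decomposition theorem) and additionally monotone (as the operator part of the monotone relation $h$, cf.\ the remark preceding the lemma); any such $S$ is a nonnegative selfadjoint operator on $U$, so its nonnegative selfadjoint square root $\sqrt{S}$ exists and is itself closed and densely defined on $U$ via the functional calculus.

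For the closedness of $\binom{L}{K}$ I would take a sequence $(u_n)$ in $E_0$ with $u_n\to u$ in $E$, $Gu_n\to y$ in $H$ and $\sqrt{S}\pi_{\mathcal{BD}(G)}u_n\to z$ in $U$. Closedness of $G$ gives $u\in\mathcal{D}(G)$ with $Gu=y$, hence $u_n\to u$ in the graph norm of $\mathcal{D}_G$. The orthogonal projection $\pi_{\mathcal{BD}(G)}:\mathcal{D}_G\to\mathcal{BD}(G)$ is continuous, so $\pi_{\mathcal{BD}(G)}u_n\to\pi_{\mathcal{BD}(G)}u$, and closedness of $\sqrt{S}$ then yields $\pi_{\mathcal{BD}(G)}u\in\mathcal{D}(\sqrt{S})$ with $\sqrt{S}\pi_{\mathcal{BD}(G)}u=z$. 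Therefore $u\in E_0$, $Lu=y$ and $Ku=z$, settling item (b) of the hypothesis.

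For the remaining items, any $u\in\mathcal{D}(G_0)$ satisfies $\pi_{\mathcal{BD}(G)}u=0\in\mathcal{D}(\sqrt{S})$ by the decomposition $\mathcal{D}_G=\mathcal{D}_{G_0}\oplus\mathcal{BD}(G)$, so such $u$ lies in $E_0$ with $Lu=Gu=G_0u$ and $Ku=\sqrt{S}(0)=0$. This gives $G_0\subseteq L$ and $K|_{\mathcal{D}(G_0)}=0$, while $L\subseteq G$ is tautological, yielding (c). Since $\mathcal{D}(G_0)\subseteq E_0$ and $\mathcal{D}(G_0)$ is dense in $E$, the space $E_0$ is dense in $E$, and continuity of the embedding $E_0\hookrightarrow E$ with respect to the graph norm of $\binom{L}{K}$ is immediate from $|u|_E\le |u|_{E_0}$; this settles (a). There is no serious obstacle in the argument beyond the initial point that $S$ monotone and selfadjoint forces $\sqrt{S}$ to be a well-defined, closed, densely defined, selfadjoint operator on $U$, which is a classical consequence of the spectral theorem for nonnegative selfadjoint operators.
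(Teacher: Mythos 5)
Your proof is correct and follows essentially the same route as the paper: closedness of $G$ upgrades convergence in $E$ to convergence in $\mathcal{D}_{G}$, continuity of $\pi_{\mathcal{BD}(G)}$ and closedness of $\sqrt{S}$ then give closedness of $\left(\begin{array}{c}L\\K\end{array}\right)$, and the hypothesis follows from $\mathcal{D}(G_{0})\subseteq E_{0}\subseteq\mathcal{D}(G)$. Your additional remarks on why $\sqrt{S}$ is a well-defined closed operator (nonnegativity of $S$ from the monotonicity of $h$) and why $K$ vanishes on $\mathcal{D}(G_{0})$ only make explicit what the paper leaves implicit.
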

\begin{proof}
Let $(w_{n})_{n\in\mathbb{N}}$ be a sequence in $E_{0}$ such that
$w_{n}\to w$ in $E$, $Gw_{n}=Lw_{n}\to v$ in $H$ and $\sqrt{S}\pi_{\mathcal{BD}(G)}w_{n}=Kw_{n}\to z$
in $U$ for some $w\in E,v\in H,z\in U.$ Due to the closedness of
$G$ we infer that $w\in\mathcal{D}(G)$ and $v=Gw.$ Thus, $(w_{n})_{n\in\mathbb{N}}$
converges to $w$ in $\mathcal{D}_{G}$ and hence, $\pi_{\mathcal{BD}(G)}w_{n}\to\pi_{\mathcal{BD}(G)}w$.
By the closedness of $\sqrt{S},$ we get $\pi_{\mathcal{BD}(G)}w\in\mathcal{D}(\sqrt{S})$
and $z=\sqrt{S}\pi_{\mathcal{BD}(G)}w$. Thus, $w\in E_{0}$ and $\left(\begin{array}{c}
L\\
K
\end{array}\right)w=\left(\begin{array}{c}
v\\
z
\end{array}\right)$ and hence, $\left(\begin{array}{c}
L\\
K
\end{array}\right)$ is closed. Thus, $E_{0}$ equipped with the graph norm of $\left(\begin{array}{c}
L\\
K
\end{array}\right)$ is a Hilbert space. Moreover, $E_{0},U,L$ and $K$ satisfy the hypothesis,
since clearly $\mathcal{D}(G_{0})\subseteq E_{0}\subseteq\mathcal{D}(G)$,
which gives $E_{0}\subseteq E$ dense and $G_{0}\subseteq L\subseteq G$.
Moreover, by definition we have $K|_{\mathcal{D}(G_{0})}=0$. 
\end{proof}
The only thing, which is left to show, is that $\mathcal{D}(A)$ is
given as in \prettyref{thm:main} (i). 
\begin{lem}
Assume that (ii) in \prettyref{thm:main} holds and let $E_{0},U$
and $K,L$ be as in \prettyref{lem:L_and_K}. Then 
\[
\mathcal{D}(A)=\left\{ (u,w)\in E_{0}\times H\,|\, K^{\diamond}Ku-L^{\diamond}w\in E\right\} .
\]
\end{lem}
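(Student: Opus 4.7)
The plan is to prove both inclusions by a single bilinear computation that compares the functional $K^{\diamond}Ku-L^{\diamond}w$ with $Dw$ on test vectors $v\in E_{0}$. The preparatory step is an abstract Green's formula: for $v\in\mathcal{D}(G)$ and $w\in\mathcal{D}(D)$, splitting $v=v_{0}+P_{\mathcal{BD}(G)}v$ and $w=w_{0}+P_{\mathcal{BD}(D)}w$ and using $G^{\ast}=-D_{0}$, $D^{\ast}=-G_{0}$, $DG=1$ on $\mathcal{BD}(G)$, and $(\stackrel{\bullet}{G})^{\ast}=\stackrel{\bullet}{D}$, one obtains
\[
\langle w|Gv\rangle_{H}+\langle Dw|v\rangle_{E}=\langle\stackrel{\bullet}{D}\pi_{\mathcal{BD}(D)}w|\pi_{\mathcal{BD}(G)}v\rangle_{\mathcal{BD}(G)}.
\]
Substituting $Lv=Gv$ together with $Ku=\sqrt{S}\,\pi_{\mathcal{BD}(G)}u$ and $Kv=\sqrt{S}\,\pi_{\mathcal{BD}(G)}v$ for $u,v\in E_{0}$ yields the master identity
\[
(K^{\diamond}Ku-L^{\diamond}w-Dw)(v)=\langle\sqrt{S}\,\pi_{\mathcal{BD}(G)}u|\sqrt{S}\,\pi_{\mathcal{BD}(G)}v\rangle_{U}-\langle\stackrel{\bullet}{D}\pi_{\mathcal{BD}(D)}w|\pi_{\mathcal{BD}(G)}v\rangle_{\mathcal{BD}(G)},
\]
valid for all $(u,w)\in E_{0}\times\mathcal{D}(D)$ and all $v\in E_{0}$.

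For $\mathcal{D}(A)\subseteq\{(u,w)\in E_{0}\times H\,|\,K^{\diamond}Ku-L^{\diamond}w\in E\}$, take $(u,w)\in\mathcal{D}(A)$. The decomposition $h=S\oplus(\{0\}\times U^{\perp})$ forces $\pi_{\mathcal{BD}(G)}u\in\mathcal{D}(S)\subseteq\mathcal{D}(\sqrt{S})$ (hence $u\in E_{0}$) and $\stackrel{\bullet}{D}\pi_{\mathcal{BD}(D)}w=S\pi_{\mathcal{BD}(G)}u+v_{1}$ with $v_{1}\in U^{\perp}$. Since any $v\in E_{0}$ satisfies $\pi_{\mathcal{BD}(G)}v\in\mathcal{D}(\sqrt{S})\subseteq U$, the $v_{1}$-contribution to the master identity vanishes, and the remainder collapses to $0$ via $\langle Sx|y\rangle_{U}=\langle\sqrt{S}x|\sqrt{S}y\rangle_{U}$ for $x\in\mathcal{D}(S)$, $y\in\mathcal{D}(\sqrt{S})$. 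Hence $K^{\diamond}Ku-L^{\diamond}w$ agrees on $E_{0}$ with $Dw\in E$, and so lies in $E$.

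For the reverse inclusion, let $(u,w)\in E_{0}\times H$ with $K^{\diamond}Ku-L^{\diamond}w\in E$. By \prettyref{lem:L_and_K} the quadruple $E_{0},U,L,K$ satisfies the hypothesis, so \prettyref{lem:D_extends} gives $w\in\mathcal{D}(D)$ and $Dw=K^{\diamond}Ku-L^{\diamond}w$, killing the left side of the master identity for every $v\in E_{0}$. Lifting any $\xi\in\mathcal{D}(\sqrt{S})$ via $\pi_{\mathcal{BD}(G)}^{\ast}\xi\in E_{0}$ shows $\pi_{\mathcal{BD}(G)}(E_{0})=\mathcal{D}(\sqrt{S})$, so letting $\xi$ range over $\mathcal{D}(\sqrt{S})$ yields $\langle\sqrt{S}\,\pi_{\mathcal{BD}(G)}u|\sqrt{S}\,\xi\rangle_{U}=\langle P_{U}\stackrel{\bullet}{D}\pi_{\mathcal{BD}(D)}w|\xi\rangle_{U}$. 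Selfadjointness of $\sqrt{S}$ together with $(\sqrt{S})^{2}=S$ then forces $\pi_{\mathcal{BD}(G)}u\in\mathcal{D}(S)$ with $S\pi_{\mathcal{BD}(G)}u=P_{U}\stackrel{\bullet}{D}\pi_{\mathcal{BD}(D)}w$, equivalently $\stackrel{\bullet}{D}\pi_{\mathcal{BD}(D)}w-S\pi_{\mathcal{BD}(G)}u\in U^{\perp}$; thus $(\pi_{\mathcal{BD}(G)}u,\stackrel{\bullet}{D}\pi_{\mathcal{BD}(D)}w)\in h$ and $(u,w)\in\mathcal{D}(A)$. The main obstacle is the careful bookkeeping in the Green's formula (tracking in which inner product each boundary term lives) and, more delicately, the passage from the $\sqrt{S}$-quadratic-form identity to the operator identity for $S$, which rests on the spectral-theoretic fact that $\sqrt{S}$ is densely defined and selfadjoint with $(\sqrt{S})^{\ast}\sqrt{S}=S$.
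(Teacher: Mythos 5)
Your proof is correct and follows essentially the same route as the paper: the ``master identity'' is just the paper's two domain computations folded into a single abstract Green's formula on the boundary data spaces, and both directions then rest on the same ingredients (the decomposition $h=S\oplus(\{0\}\times U^{\bot})$, \prettyref{lem:D_extends} for the reverse inclusion, and the characterization $\mathcal{D}(S)=\{x\in\mathcal{D}(\sqrt{S})\,|\,\sqrt{S}x\in\mathcal{D}(\sqrt{S})\}$). If anything, your explicit insertion of the projection $P_{U}$, concluding $S\pi_{\mathcal{BD}(G)}u=P_{U}\stackrel{\bullet}{D}\pi_{\mathcal{BD}(D)}w$ and only then $\stackrel{\bullet}{D}\pi_{\mathcal{BD}(D)}w-S\pi_{\mathcal{BD}(G)}u\in U^{\bot}$, is slightly more careful than the paper's statement $S\pi_{\mathcal{BD}(G)}u=\stackrel{\bullet}{D}\pi_{\mathcal{BD}(D)}w$, which tacitly assumes $\stackrel{\bullet}{D}\pi_{\mathcal{BD}(D)}w\in U$.
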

\begin{proof}
Let $(u,w)\in\mathcal{D}(A),$ i.e. $(u,w)\in\mathcal{D}(G)\times\mathcal{D}(D)$
with $(\pi_{\mathcal{BD}(G)}u,\stackrel{\bullet}{D}\pi_{\mathcal{BD}(D)}w)\in h.$
Then, by definition of $U$ and $S$, we have that $\pi_{\mathcal{BD}(G)}u\in\mathcal{D}(S)\subseteq E_{0}$
and $\stackrel{\bullet}{D}\pi_{\mathcal{BD}(D)}w-S\pi_{\mathcal{BD}(G)}u\in U^{\bot}.$
Let $x\in E_{0}$ and set $w_{0}\coloneqq w-P_{\mathcal{BD}(D)}w\in\mathcal{D}(D_{0})$
as well as $x_{0}\coloneqq x-P_{\mathcal{BD}(G)}x\in\mathcal{D}(G_{0}).$
Then we compute
\begin{align*}
\left(K^{\diamond}Ku-L^{\diamond}w\right)(x) & =\langle Ku|Kx\rangle_{U}-\langle w|Lx\rangle_{H}\\
 & =\langle\sqrt{S}\pi_{\mathcal{BD}(G)}u|\sqrt{S}\pi_{\mathcal{BD}(G)}x\rangle_{U}-\langle w|Gx\rangle_{H}\\
 & =\langle S\pi_{\mathcal{BD}(G)}u-\stackrel{\bullet}{D}\pi_{\mathcal{BD}(D)}w|\pi_{\mathcal{BD}(G)}x\rangle_{\mathcal{BD}(G)}+\\
 & \quad+\langle\stackrel{\bullet}{D}\pi_{\mathcal{BD}(D)}w|\pi_{\mathcal{BD}(G)}x\rangle_{\mathcal{BD}(G)}-\langle w|Gx\rangle_{H}\\
 & =\langle P_{\mathcal{BD}(D)}w|GP_{\mathcal{BD}(G)}x\rangle_{H}+\langle DP_{\mathcal{BD}(G)}w|P_{\mathcal{BD}(G)}x\rangle_{E}-\langle w|Gx\rangle_{H}\\
 & =\langle P_{\mathcal{BD}(D)}w|GP_{\mathcal{BD}(G)}x\rangle_{H}+\langle DP_{\mathcal{BD}(G)}w|P_{\mathcal{BD}(G)}x\rangle_{E}-\\
 & \quad-\langle w|GP_{\mathcal{BD}(G)}x\rangle_{H}-\langle w|G_{0}x_{0}\rangle_{H}\\
 & =\langle-w_{0}|GP_{\mathcal{BD}(G)}x\rangle_{H}+\langle DP_{\mathcal{BD}(G)}w|P_{\mathcal{BD}(G)}x\rangle_{E}+\langle Dw|x_{0}\rangle_{E}\\
 & =\langle D_{0}w_{0}|P_{\mathcal{BD}(G)}x\rangle_{E}+\langle DP_{\mathcal{BD}(G)}w|P_{\mathcal{BD}(G)}x\rangle_{E}+\langle Dw|x_{0}\rangle_{E}\\
 & =\langle Dw|x\rangle_{E},
\end{align*}
where we have used $\pi_{\mathcal{BD}(G)}x\in\mathcal{D}(\sqrt{S})\subseteq U$
in the fourth equality. Thus, $K^{\diamond}Ku-L^{\diamond}w=Dw\in E.$
Moreover, $u\in E_{0}$ since $\pi_{\mathcal{BD}(G)}u\in\mathcal{D}(S)\subseteq\mathcal{D}(\sqrt{S})$.
This proves one inclusion. Let now $(u,w)\in E_{0}\times H$ with
$K^{\diamond}Ku-L^{\diamond}w\in E.$ Then by \prettyref{lem:D_extends}
$w\in\mathcal{D}(D)$ with $K^{\diamond}Ku-L^{\diamond}w=Dw$. We
need to prove that $\pi_{\mathcal{BD}(G)}u\in\mathcal{D}(S).$ We
already have $\pi_{\mathcal{BD}(G)}u\in\mathcal{D}(\sqrt{S})$, by
definition of $E_{0}$. Let now $v\in\mathcal{D}(\sqrt{S}).$ Then
we have 
\begin{align*}
\langle\sqrt{S}\pi_{\mathcal{BD}(G)}u|\sqrt{S}v\rangle_{U} & =\langle Ku|K\pi_{\mathcal{BD}(G)}^{\ast}v\rangle_{U}\\
 & =\left(K^{\diamond}Ku-L^{\diamond}w\right)(\pi_{\mathcal{BD}(G)}^{\ast}v)+\langle w|L\pi_{\mathcal{BD}(G)}^{\ast}v\rangle_{H}\\
 & =\langle Dw|\pi_{\mathcal{BD}(G)}^{\ast}v\rangle_{E}+\langle w|G\pi_{\mathcal{BD}(G)}^{\ast}v\rangle_{H}\\
 & =\langle w|G\pi_{\mathcal{BD}(G)}^{\ast}v\rangle_{\mathcal{D}_{D}}\\
 & =\langle\pi_{\mathcal{BD}(D)}w|\stackrel{\bullet}{G}v\rangle_{\mathcal{BD}(D)}\\
 & =\langle\stackrel{\bullet}{D}\pi_{\mathcal{BD}(D)}w|v\rangle_{U},
\end{align*}
which gives $\pi_{\mathcal{BD}(G)}u\in\mathcal{D}(S)$ with $S\pi_{\mathcal{BD}(G)}u=\stackrel{\bullet}{D}\pi_{\mathcal{BD}(D)}w.$
Hence, $(\pi_{\mathcal{BD}(G)}u,\stackrel{\bullet}{D}\pi_{\mathcal{BD}(D)}w)\in S\subseteq h,$
and thus, $(u,w)\in\mathcal{D}(A).$ 
\end{proof}

\section*{Acknowledgement}

The author would like to thank George Weiss, who asked the question
on the relation between the operators considered in \cite{Staffans2012_phys}
and \cite{Trostorff2013_bd_maxmon} during a workshop in Leiden.

\end{document}